\def \vu{ \mathbf{u} }
\def \vx{ \mathbf{x} }
\def \vq {\mathbf{q} }
\newcommand{\uc}{u^{\rm c}}
\newcommand{\us}{u^{\rm s}}
\newcommand{\ue}{v^{\rm e}}
\newcommand{\uo}{v^{\rm o}}
\newcommand{\lf}{\left}
\newcommand{\rt}{\right}
\def \vu{ \mathbf{u} }
\def \vx{ \boldsymbol{x} }
\newcommand{\disk}{s}
\newcommand{\sphere}{s}
\def \vu{ \mathbf{u} }
\def \vx{ \boldsymbol{x} }
\newcommand{\ds}{\displaystyle}
\newcommand{\Sph}{\mathbb{S}^2}
\newtheorem{remark}{Remark}
\newtheorem{definition}{Definition}
\newtheorem{lemma}{Lemma}
\newtheorem{theorem}{Theorem}
\begin{document}

\title{Barycentric interpolation formulas for the sphere and the disk}
\author{Michael Chiwere\footnote{Boise State University, Department of Mathematics, 1910 University Drive, Boise, ID 83725-1555 (michaelchiwere@u.boisestate.edu, gradywright@boisestate.edu).} $^{,}$\footnote{Corresponding author.} \, and Grady B. Wright$^*$}

\date{}
\maketitle

\begin{abstract}

Spherical and polar geometries arise in many important areas of computational science, including weather and climate forecasting, optics, and astrophysics. In these applications, tensor-product grids are often used to represent unknowns. However, interpolation schemes that exploit the tensor-product structure can introduce artificial boundaries at the poles in spherical coordinates and at the origin in polar coordinates, leading to numerical challenges, especially for high-order methods. In this paper, we present new bivariate trigonometric barycentric interpolation formulas for spheres and bivariate trigonometric/polynomial barycentric formulas for disks, designed to overcome these issues. These formulas are also efficient, as they only rely on a set of (precomputed) weights that depend on the grid structure and not the data itself. The formulas are based on the Double Fourier Sphere (DFS) method, which transforms the sphere into a doubly periodic domain and the disk into a domain without an artificial boundary at the origin. For standard tensor-product grids, the proposed formulas exhibit exponential convergence when approximating smooth functions. We provide numerical results to demonstrate these convergence rates and showcase an application of the spherical barycentric formulas in a semi-Lagrangian advection scheme for solving the tracer transport equation on the sphere.

\end{abstract}
	
\section{Introduction\label{sec:1}}

Spherical and polar geometries are central to many important areas of computational science, including climate modeling and weather forecasting~\cite{1980:HaltinerWilliams}, geodynamics~\cite{gerya2019introduction}, cosmology~\cite{hu2002cosmic}, material science~\cite{HAN20191}, optics~\cite{mahajan2007orthonormal}, and astrophysics~\cite{godon1997numerical}.  A key computational challenge that arises in many of these applications is interpolation, which is essential for such tasks as coupling models, filling in gaps between observations, evaluating numerical simulations at arbitrary points, and constructing surrogate models of complicated physical processes.  Tensor-product spherical or polar grids are commonly employed in these applications due to their straightforward implementation, ease for visualization, alignment with geographic coordinates (in the case of the sphere), natural handling of boundary conditions and symmetries, direct mapping to computer memory layouts, and efficiency and accuracy in performing spectral transforms on the sphere~\cite{shtns} and disk~\cite{janssen2007computing}. 

However, these grids also present challenges for interpolation, particularly for high-order accurate methods.  One major challenge is that interpolation schemes that directly exploit the tensor-product grid structure introduce artificial boundaries at the north and south poles of the spherical coordinate system and at the origin of the polar coordinate system---collectively known as the ``pole problem''~\cite[Ch.\ 18]{boyd2001chebyshev}.  The Double Fourier Sphere (DFS) method, first proposed by Merilees~\cite{merilees1973pseudospectral}, addresses this issue by transforming the sphere into a doubly periodic domain (i.e., a torus) and the disk into a domain without an artificial boundary at the origin.  This transformation makes it possible to apply high-order methods based on bivariate trigonometric polynomial approximations in latitude-longitude coordinates for the sphere~\cite{townsend2016computing} and bivariate approximations based on trigonometric and algebraic polynomials in angular-radial coordinates that are smooth over the entire disk~\cite{wilber2017computing}.  

While the DFS method forms the foundation of many high-order accurate techniques for solving a wide-range of problems  (e.g.,~\cite{boyd1978choice,yee1980studies,fornberg1995pseudospectral,orszag1974fourier,Cheong2000261,townsend2016computing,wilber2017computing,fortunato2024high,slomka2018stokes,spotz1998fast,LaytonSpotz2003,yoshimura2021improved,shen2000new}), there has been surprisingly little focus on deriving barycentric interpolation formulas from this approach.\footnote{We note that the paper~\cite{berrut2021linear} does derive a barycentric formula for the disk, but it is not based on the DFS method and treats the origin of the disk as an artificial boundary.}  This paper aims to address this gap by developing the first bivariate barycentric interpolation formulas for the sphere and disk that work directly with the sampled data on tensor-product spherical or polar grids.

Barycentric interpolation formulas have gained significant popularity over the past 20 years, particularly following the influential review by Berrut and Trefethen~\cite{berrut2004barycentric}.  While their paper focused primarily on polynomial interpolation, trigonometric analogs have existed since the earlier works of de la Vall\'ee Poussin~\cite{poussin1908convergence}, Salzer~\cite{salzer1948coefficients}, Henrici~\cite{henrici1979barycentric}, and Berrut~\cite{berrut1984baryzentrische}.  One key advantage of barycentric interpolation formulas is that they are formulated in terms of a set of weights that only depend on the locations of the sampled data, making the method straightforward to implement.  These weights are also known for many common grids (e.g., equally spaced and Chebyshev) or can be precomputed and reused for any new set of data.  This makes the formulas efficient to evaluate, requiring $O(kn)$ operations for $n$ grid points and $k$ evaluation points, compared to methods based coefficient expansions of algebraic or trigonometric polynomials which require $O(n\log n + kn)$ when the grid points are conducive to fast transforms or  $O(n^2 + kn)$ when they are not.  Additionally, barycentric polynomial interpolation is known to be numerically stable~\cite{higham2004numerical}.  While stability of the barycentric trigonometric formulas can be an issue, in most cases of practical interest one does not typically encounter issues for equispaced points~\cite{austin2017numerical}.

Our goal is to use these trigonometric and polynomial barycentric formulas with the DFS technique to derive new bivariate barycentric interpolation formulas for the sphere and disk that avoid the pole problem.  The derivation leverages the symmetry properties induced by the DFS transformation, along with the barycentric formulas developed by Berrut~\cite{berrut1984baryzentrische} for interpolating even and odd functions.  The new methods are applicable to rather general tensor-product grids, but we focus our derivation on specific grids that are commonly used in practical applications.

There are several existing techniques for approximation on the sphere and disk that also avoid the pole problem.  These include splines (e.g., \cite{schumaker,ALFELD19965}), moving least squares (e.g.,\cite{W_sphere,Levin}), and radial basis functions (e.g.,\cite{primerfull, heryudono2010radial}).  These methods are more flexible in that they can be used for both structured and unstructured grids.  However, achieving high-order accuracy with splines and moving least squares can be challenging, and there can be numerical instabilities issues with radial basis functions when taking them to higher-order (see~\cite[Ch.\ 3]{primerfull} more details).  This can be especially problematic for tensor product grids where the points are highly anisotropic near the poles.  In contrast, the interpolation formulas we derive converge faster than any polynomial order with increasing grid size for samples of smooth functions, and they avoid numerical instabilities typically associated with solving linear systems.

Finally, to highlight the advantages of the new interpolation formulas for the sphere, we apply them within a semi-Lagrangian advection (SLA) scheme to solve the tracer transport equation on the sphere.  Accurate solutions of this equation are critical, as global atmospheric flows are dominated by horizontal transport.  SLA techniques are widely used in atmospheric sciences to simulate not only tracer transport, but more complex non-linear shallow water flows~\cite{staniforth1991SL}.  A key step in SLA methods is interpolation, and while higher-order accurate methods have been developed for other aspects of these schemes~\cite{LaytonSpotz2003,yoshimura2021improved,ritchie1995implementation}, interpolation is done using low order methods (e.g., bivariate cubic polynomials).  Our application of the new barycentric formulas demonstrates that incorporating high-order methods also for interpolation can significantly improve the solution accuracy when solving the transport equation.

The remainder of the paper is structured as follows: First, we review the DFS method and the symmetries it induces in Section \ref{sec:dfs}. We then derive the bivariate trigonometric barycentric interpolation formulas for the sphere in Section \ref{sec:3}, followed by the formulas for the disk in Section \ref{sec:bary_disk}.  Both sections contain numerical experiments demonstrating the accuracy of these formulas.  We present an application of the new spherical interpolation formulas to solving the transport equation on the sphere using SLA in Section \ref{sec:application}.  Concluding remarks and a discussion of future directions are given in Section \ref{sec:conclusion}.
	
\section{The Double Fourier Sphere (DFS) method\label{sec:dfs}}
The DFS method was first proposed by Merilees~\cite{merilees1973pseudospectral} for approximation on spheres and has since undergone several developments~\cite{boyd1978choice,yee1980studies,fornberg1995pseudospectral,orszag1974fourier,Cheong2000261,townsend2016computing}, with more recent studies on its convergence properties~\cite{mildenberger2021approximation}, generalization to $d$-dimensional manifolds~\cite{Mildenberger2023}, and extension to solving surface PDEs~\cite{fortunato2024high}.  The central  idea of the method is to transform a function on the sphere to one on a rectangular domain, while preserving the periodicity of the function in both the azimuthal and polar directions.  While this transformation results in an apparent ``doubling-up'' of the function, it opens up the possibility of using bivariate trigonometric (i.e., Fourier) expansions for approximating functions on the sphere.  These properties are why the method is called the DFS method.

The transform introduced by the DFS method can be described as follows~\cite{townsend2016computing}.  Let $(x,y,z)$ be a point on the unit sphere\footnote{Spheres of arbitrary radii can also be treated by appropriate scaling} and parameterized in spherical coordinates as
\begin{equation*}
x = \cos\phi\sin\theta, \quad y = \sin\phi\sin\theta, \quad z = \cos\theta,\qquad (\lambda,\theta)\in[0,2\pi]\times[0,\pi],
\end{equation*}
where $\phi$ and $\theta$ are the azimuthal and polar coordinates, respectively.  A function $f(x,y,z)$ on a sphere is transformed to these coordinates as
\begin{align}
f(\phi , \theta) =
f(\cos \phi \sin \theta , \sin \phi  \sin \theta , \cos \theta), \quad (\phi , \theta) \in [0 ,2\pi]\times [0, \pi].
\label{eq:f_sph}
\end{align}
However, while the transformed function is $2\pi$ periodic in $\phi$, artificial boundaries have been introduced in $\theta$ at $0$ and $\pi$, and the inherent periodicity in this coordinate has been destroyed.  To recover periodicity in $\theta$, the DFS method associates $f$ with the following ``doubled-up''  function on $[0, 2\pi]\times [-\pi ,\pi]$:
\begin{equation}
\label{eq:ftilde_sph}
		\tilde{f}(\phi , \theta) = 
		\begin{cases}
			g(\phi ,\theta), & (\phi , \theta) \in [0 , \pi]\times [0, \pi], \\[3pt]
			h(\phi-\pi, \theta), & (\phi , \theta)\in [\pi, 2\pi] \times [0,\pi], \\[3pt]
			g(\phi-\pi , -\theta), & (\phi , \theta) \in [\pi, 2\pi] \times [-\pi, 0], \\[3pt]
			h(\phi+\phi, -\theta), & (\phi , \theta) \in [0, \pi]\times [-\pi , 0],
		\end{cases}
	\end{equation}
where $g(\phi , \theta) = f(\phi  , \theta)$ and $h(\phi , \theta) = f(\phi + \pi, \theta)$ for $(\phi , \theta) \in [0,\pi] \times [0,\pi].$ The new function $\tilde{f}$ has the properties that $\tilde{f}(\phi,\theta) = f(\phi,\theta)$ for $(\phi , \theta) \in [0 ,2\pi]\times [0, \pi]$ and $\tilde{f}(\phi,\theta) =  f(\phi-\pi,-\theta)$ for $(\phi , \theta) \in [0 ,2\pi]\times [0, \pi]$.  This last relationship is referred to as a \textit{glide reflection}.  The function $\tilde{f}$ is $2\pi$-periodic in both $\phi$ and $\theta$ and constant along the lines $\theta = 0$ and $\theta =\pm\pi$, which corresponds to the north and south poles.  Figure~\ref{fig:football} gives a visual summary of the DFS method.
\begin{figure} 
 \centering
 \begin{minipage}{.49\textwidth} 
 \centering
   \begin{overpic}[width=.5\textwidth]{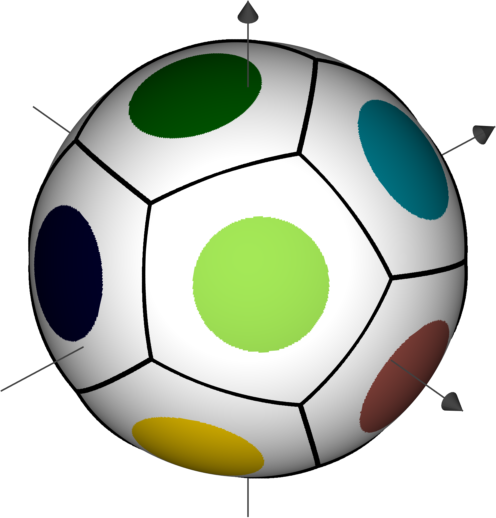}
   \end{overpic}
    \begin{overpic}[width=.95\textwidth]{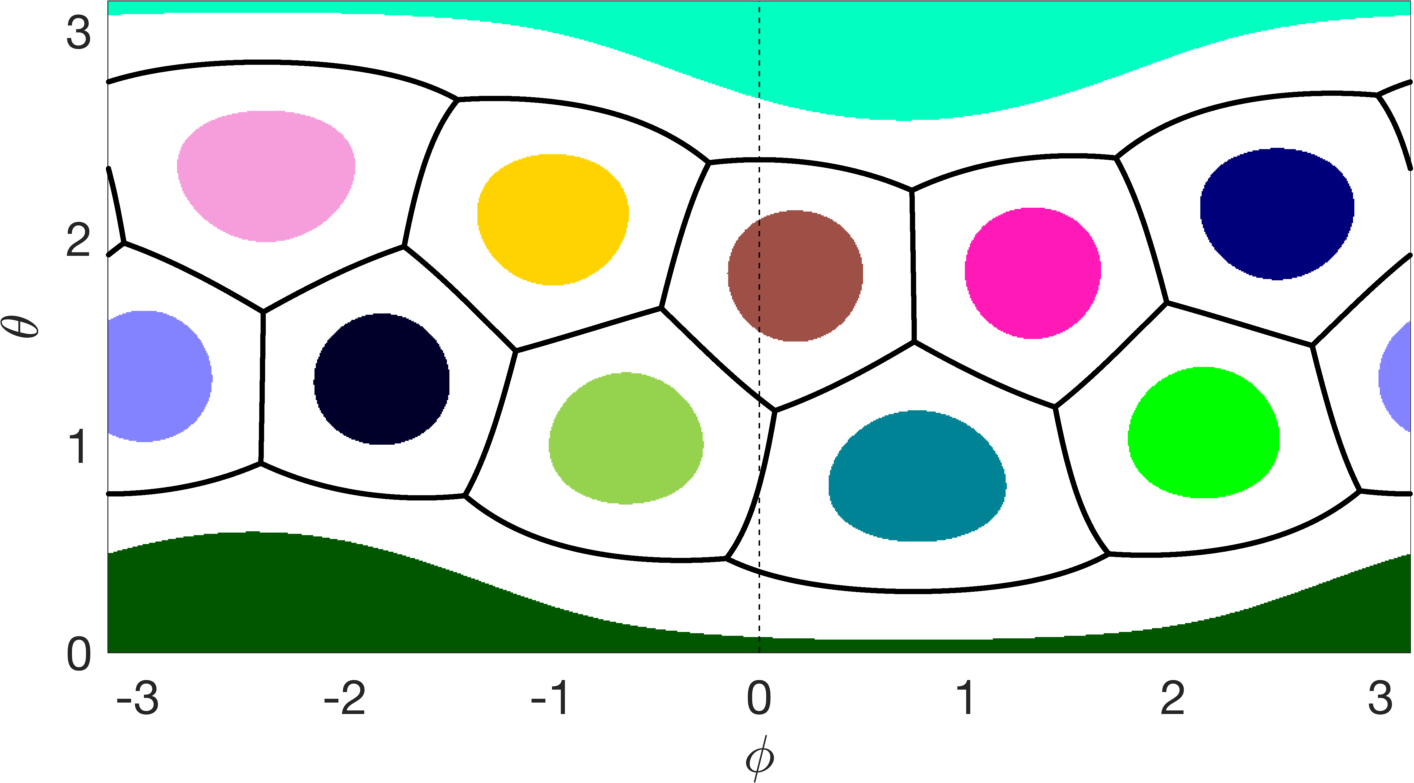}
    \put(0,105) {(a)}
    \put(0,60) {(b)}
   \end{overpic}
 \end{minipage}
\begin{minipage}{.49\textwidth} 
   \begin{overpic}[width=0.95\textwidth]{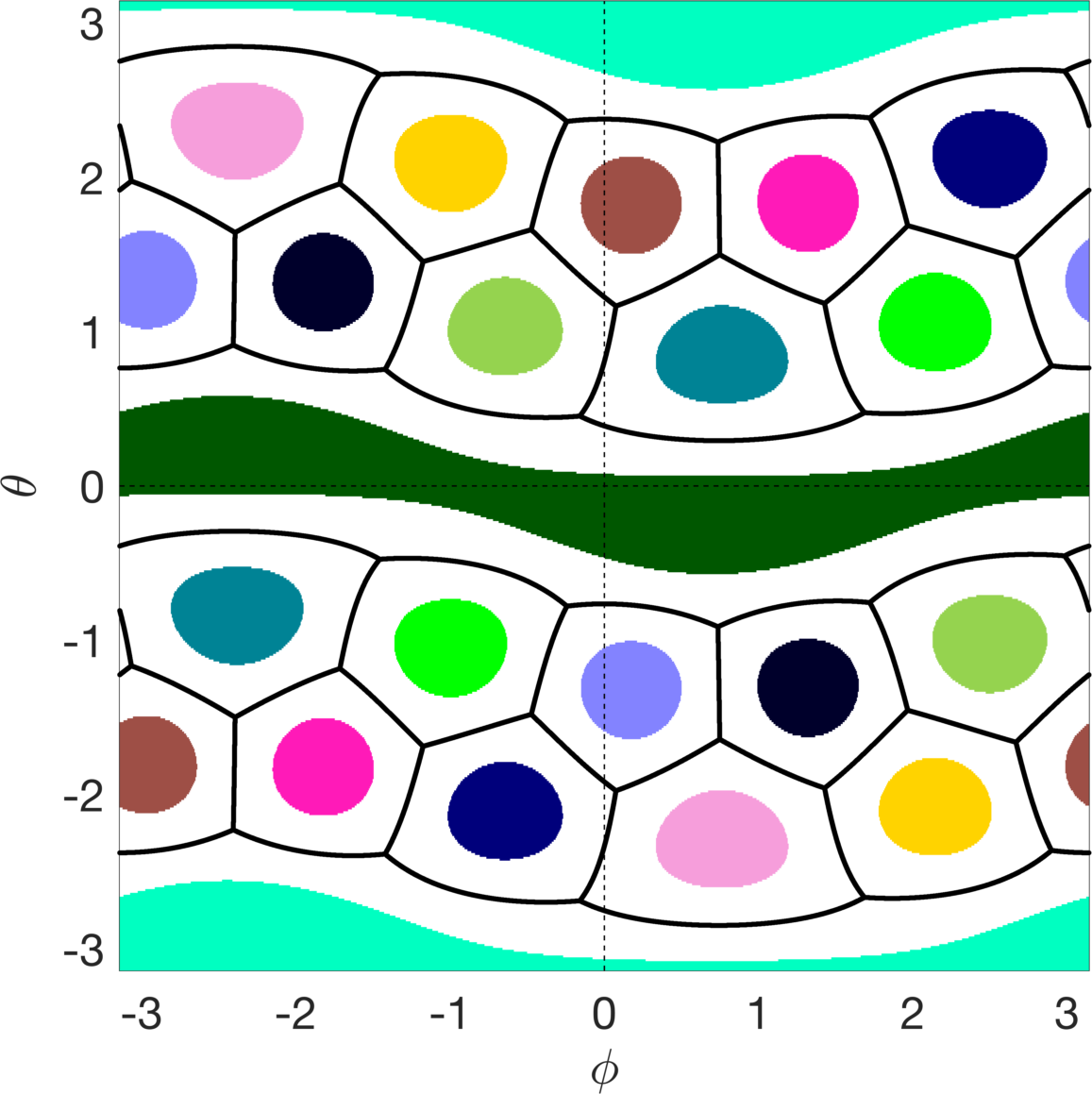}
  \put(-5,100) {(c)}
 \end{overpic}
 \end{minipage}
 \caption{The DFS method applied to a football (or soccer ball) pattern on the sphere. (a) Football pattern---note the rotation to avoid symmetries  with respect to the standard coordinate axes. (b) The projection of the football pattern using spherical coordinates. (c) Football pattern after applying the DFS method resulting in a doubly periodic pattern.}
 \label{fig:football}
\end{figure}

There is a DFS analog for functions defined on a disk that have similarly gone through several developments~\cite{heinrichs2004spectral,eisen1991spectral,shen2000new,fornberg1995pseudospectral,wilber2017computing}. The central idea is to transform a function on a disk to one on a rectangular domain that preserves the periodicity of the function in the polar direction and contains no artificial boundaries in the radial direction at the origin of the disk.  This can be done as follows~\cite{wilber2017computing}.  First, a function $f(x,y)$ defined in Cartesian coordinates on the unit disk\footnote{Disks of arbitrary radii can also be treated by appropriate scaling} is written in polar coordinates as
\begin{align}
f(\phi, \rho) = f(\rho\cos \phi , \rho \sin\phi) \qquad (\phi,\rho) \in [0,2\pi]\times [0,1],
\label{eq:f_disk}
\end{align}
where $\phi$ and $\rho$ are the polar and radial coordinates, respectively.  Similar to the sphere, the transformed function is $2\pi$ periodic in $\phi$, but now has an artificial boundary at $\rho=0$, which corresponds to the center of the disk.  To remove this boundary, the disk counterpart to the DFS method associates $f$ to the ``doubled-up" function $\tilde{f}$ on $[0,2\pi]\times [-1,1]$:
\begin{equation}
\label{eq:ftilde_disk}
\tilde{f}(\phi, \rho) = \begin{cases}
g(\phi ,\rho), & (\phi,\rho) \in [0 , \pi]\times [0, 1], \\[3pt]
h(\phi-\theta,\rho), & (\phi,\rho)\in [\pi, 2\pi] \times [0,1], \\[3pt]
g(\phi-\pi , -\rho), & (\phi , \rho) \in [\pi, 2\pi] \times [-1, 0], \\[3pt]
h(\phi + \pi, -\rho), & (\phi , \rho) \in [0, \pi]\times [-1 , 0],
\end{cases}
\end{equation}
where $g(\phi, \rho) = f(\phi  , \rho)$ and $h(\phi, \rho) = f(\phi+\pi, \rho)$ for $(\phi, \rho) \in [0,\pi]\times [0,1]$.  The new function $\tilde{f}$ is $2\pi$-periodic in $\phi$ and does not have a boundary at $\rho=0$.   Furthermore, it has a similar glide reflection property as the sphere and is constant along the line $\rho = 0$. 

The disk counterpart to the DFS method also results in an apparent ``doubling-up'' of the function that is favorable for bivariate Fourier-polynomial approximations, such as Fourier-Chebyshev or Fourier-Legendre series.  In these cases, the polynomials are defined for $\rho\in[-1,1]$, rather than $[0,1]$.  For polynomial approximations based on interpolation at Chebyshev or Legendre points, this not only means  the origin is not a boundary, but also that the interpolation points are not as clustered at the origin~\cite{fornberg1995pseudospectral} (see also Figure \ref{fig:disk_pts}).  Note that even though the disk counterpart of the DFS method does not use bivariate Fourier series, it is still referred to as the DFS method for the disk~\cite{wilber2017computing}.

\subsection{BMC symmetry and even-periodic/odd-antiperiodic decompositions\label{sec:dfs_parity}}
The barycentric interpolation schemes introduced here are designed to enforce the symmetry associated with the functions $\tilde{f}$ in both DFS methods \eqref{eq:ftilde_sph} and \eqref{eq:ftilde_disk} so that they can always be related back to the respective domains they are defined on.  To further elucidate the symmetries associated with these functions, we use a slight abuse of notation to write them as
\begin{equation}
\label{eq:bmc}
\tilde{f} = 
\begin{bmatrix}
g & h\\ 
\mathsf{flip}(h) & \mathsf{flip}(g)
\end{bmatrix},
\end{equation}
where $\mathsf{flip}$ refers to the MATLAB command that reverses the order of the rows of a matrix.  The symmetry exhibited in \eqref{eq:bmc} is referred to as \textit{block-mirror-centrosymmetric} (BMC) symmetry~\cite{townsend2016computing} and can be further classified into BMC-I and BMC-II types if $\tilde{f}$ is the extension of a function on the sphere and disk, respectively.  
\begin{definition}(Block-mirror-centrosymmetric (BMC) functions~\cite{townsend2016computing,wilber2017computing})
A function $\tilde{f}:[0,2a]\times[-b,b]\rightarrow \mathbb{C}$, where $a,b > 0$, is a BMC function if $\tilde{f}$ is $2a$ periodic in its first variable and there are functions $g,h:[0,a]\times[0,b]\rightarrow \mathbb{C}$ such that $\tilde{f}$ satisfies~\eqref{eq:bmc}.  BMC functions can further be classified as follows:
\begin{enumerate}
\item If $\tilde{f}$ is $2b$-periodic in its second argument and constant when its second argument is equal to $0$ and $b$, then $\tilde{f}$ is a BMC-I function.
\item If $\tilde{f}$ is constant when the second argument is equal to $0$, then $\tilde{f}$ is a BMC-II function.
\end{enumerate}
\label{def:BMCfunction}
\end{definition}
From these definitions, we see that if $\tilde{f}$ is a BMC-I function with $a=b=\pi$, then it can be related to a function $f$ on the sphere using the functions $g$ and $h$.  This function will be at least continuous on the sphere since $\tilde{f}$ is forced to be constant at the north and south poles.  A similar result holds for the disk and BMC-II functions with $a=\pi$ and $b=1$.  However, these functions are constant only at the origin, which is required for continuity.  Our goal is to construct interpolation formulas that satisfy BMC symmetry and, in certain special cases, BMC-I or BMC-II symmetry so that they can be related back to the sphere or disk.
Another goal of our interpolation formulas is to work only on the original functions $f$ in \eqref{eq:f_sph} and \eqref{eq:f_disk}, rather than their respective extensions \eqref{eq:ftilde_sph} and \eqref{eq:ftilde_disk}. 

Both of these goals can be realized from the following decomposition (first noted in~\cite{ townsend2016computing}) of $\tilde{f}$ in \eqref{eq:bmc}: 
\begin{equation}
\label{eq:bmcI}
\tilde{f} = 
\underbrace{\frac{1}{2}\begin{bmatrix}
g-h & -(g-h)\\ \mathsf{flip}(g-h) & -\mathsf{flip}(g-h)
\end{bmatrix}}_{\tilde{f}^-} + 
\underbrace{\frac{1}{2}\begin{bmatrix}
g+h & g+h\\ \mathsf{flip}(g+h) & \mathsf{flip}(g+h)
\end{bmatrix}}_{\tilde{f}^+}.
\end{equation}
For BMC-I functions $\tilde{f}(\phi,\theta)$ with $a=b=\pi$, we see from this decomposition that  $\tilde{f}^-$ is an odd $2\pi$-periodic function in $\theta$ and $\pi-$antiperiodic\footnote{$\pi$-antiperiodic here means $\tilde{f}^{-}(\phi+\pi,\theta)=-\tilde{f}^{-}(\phi,\theta)$ for all $\phi$ and any $\theta$} in $\phi$, while $\tilde{f}^+$ is an even $2\pi$-periodic function in $\theta$, $\pi-$periodic in $\phi$, and constant when $\theta=0,\pi$.  The following Lemma uses this result to give conditions for the extension of a function $f$ defined for $[0,2\pi]\times[0,\pi$ to have BMC-I symmetry.
\begin{lemma}\label{lemma:bmcI}
Let $f:[0,2\pi]\times[0,\pi]\rightarrow \mathbb{C}$ have the additive decomposition
\begin{align}
f(\phi,\theta) = f^{-}(\phi,\theta) + f^{+}(\phi,\theta),
\end{align}
where $f^-(\phi , \theta) = \frac{1}{2}\left(g(\phi,\theta)-h(\phi , \theta)\right)$ and $f^+(\phi , \theta) = \frac{1}{2}\left(g(\phi, \theta) + h(\phi, \theta)\right)$, with $g$ and $h$ taking the same definitions as in \eqref{eq:ftilde_sph}.  If $f^-$ is odd, $2\pi$-periodic in $\theta$, and $\pi-$antiperiodic in $\phi$, and $f^+$ is even, $2\pi$-periodic in $\theta$, and $\pi-$periodic in $\phi$, then the extension $\tilde{f}$ of $f$ given by \eqref{eq:ftilde_sph} is a BMC function. If $f^+$  is furthermore constant for all $\phi$ when $\theta=0,\pi$, then $\tilde{f}$ is a BMC-I function.
\end{lemma}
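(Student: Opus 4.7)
The plan is to verify each clause of Definition~\ref{def:BMCfunction} directly from the piecewise formula \eqref{eq:ftilde_sph}, using the reconstructions $g = f^+ + f^-$ and $h = f^+ - f^-$ on $[0,\pi]\times[0,\pi]$ to rewrite every quadrant of $\tilde{f}$ in terms of $f^{\pm}$. The block structure \eqref{eq:bmc} is immediate from the four-quadrant definition \eqref{eq:ftilde_sph}, so for the BMC claim only $2\pi$-periodicity of $\tilde{f}$ in $\phi$ remains to be verified, and for the BMC-I upgrade one additionally needs $2\pi$-periodicity in $\theta$ together with constancy of $\tilde{f}$ along $\theta = 0$ and $\theta = \pm\pi$.

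For the $2\pi$-periodicity in $\phi$, I would check the seam $\tilde{f}(0,\theta) = \tilde{f}(2\pi,\theta)$ on the two halves separately. On $\theta \in [0,\pi]$, this reduces to $f(0,\theta) = f(2\pi,\theta)$, which follows because $\pi$-periodicity of $f^+$ and $\pi$-antiperiodicity of $f^-$ each imply $2\pi$-periodicity in $\phi$, so their sum $f$ is $2\pi$-periodic. On $\theta \in [-\pi,0]$ the equality is automatic from \eqref{eq:ftilde_sph}, since $\tilde{f}(0,\theta) = h(0,-\theta) = f(\pi,-\theta)$ and $\tilde{f}(2\pi,\theta) = g(\pi,-\theta) = f(\pi,-\theta)$. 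As a consistency check for the seam at $\theta = 0$, oddness of $f^-$ in $\theta$ gives $f^-(\phi,0) = 0$, hence $g(\phi,0) = h(\phi,0)$, so the upper- and lower-half formulas agree there.

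For the BMC-I case, the central observation is that odd and $2\pi$-periodic in $\theta$ together force $f^-(\phi,\pm\pi) = 0$: by periodicity $f^-(\phi,\pi) = f^-(\phi,-\pi)$, and by oddness $f^-(\phi,-\pi) = -f^-(\phi,\pi)$. Combined with $f^-(\phi,0) = 0$, this means $\tilde{f} = f^+$ on each of the polar lines $\theta = 0, \pm\pi$; the required constancy of $\tilde{f}$ there is then exactly the added hypothesis on $f^+$. The $2\pi$-periodicity in $\theta$, i.e., $\tilde{f}(\phi,\pi) = \tilde{f}(\phi,-\pi)$, is then immediate because both sides reduce to the common constant value of $f^+(\cdot,\pi)$, regardless of which quadrant formula is applied.

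The argument is a sequence of boundary-value checks, so the only real difficulty is careful bookkeeping across the four quadrants of \eqref{eq:ftilde_sph}---in particular, tracking which of the (anti)periodicity-in-$\phi$ versus parity-in-$\theta$ hypotheses is being invoked at each seam. The vanishing identities $f^-(\phi,0) = f^-(\phi,\pm\pi) = 0$, which come for free from the odd and $2\pi$-periodic structure of $f^-$ in $\theta$, do most of the work by collapsing each pole condition to a simple statement about $f^+$.
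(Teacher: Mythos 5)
Your proof is correct and takes essentially the same approach as the paper, whose entire proof is the one-line remark that the result follows by applying \eqref{eq:ftilde_sph} to $f$ and checking that one obtains \eqref{eq:bmcI} with the parity/periodicity properties of a BMC-I function. Your seam checks at $\phi=0$ versus $\phi=2\pi$ and the observation that oddness plus $2\pi$-periodicity force $f^-(\phi,0)=f^-(\phi,\pm\pi)=0$ are precisely the bookkeeping the paper leaves implicit.
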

\begin{proof}
The result follows directly by applying \eqref{eq:ftilde_sph} to $f$ and showing one gets \eqref{eq:bmcI} with the appropriate parity/periodic properties of a BMC-I function.
\end{proof}


A similar result holds for the disk, but, in this case, $\tilde{f}^-$ and $\tilde{f}^{+}$ in \eqref{eq:bmcI} only need to be odd and even in $\rho$, respectively,  but not periodic.  We summarize this in a similar lemma and omit the proof.
\begin{lemma}\label{lemma:bmcII}
Let $f:[0,2\pi]\times[0,1]\rightarrow \mathbb{C}$ have the additive decomposition
\begin{align}
f(\phi,\rho) = f^{-}(\phi,\rho) + f^{+}(\phi,\rho),
\end{align}
where $f^-(\phi , \rho) = \frac{1}{2}\left(g(\phi,\rho)-h(\phi , \rho)\right)$ and $f^+(\phi , \rho) = \frac{1}{2}\left(g(\phi, \rho) + h(\phi, \rho)\right)$, with $g$ and $h$ taking the same definitions as in \eqref{eq:ftilde_disk}.  If $f^-$ is odd in $\rho$ and $\pi$-antiperiodic in $\phi$, and $f^+$ is even in $\rho$ and $\pi-$periodic in $\phi$, then the extension $\tilde{f}$ of $f$ is a BMC function.  If $f^+$  is furthermore constant for all $\phi$ when $\rho=0$, then $\tilde{f}$ is a BMC-II function.
\end{lemma}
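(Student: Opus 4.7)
The plan is to follow the template of the proof of Lemma~\ref{lemma:bmcI}, since the BMC-II statement for the disk differs from the BMC-I statement for the sphere only in that periodicity in the second variable is dropped. The disk extension \eqref{eq:ftilde_disk} demands oddness/evenness in $\rho$ but no periodicity there, reflecting the fact that $\rho=1$ is a genuine boundary of the disk rather than an artificial one.

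First I would observe that under the hypotheses, the building blocks $g,h:[0,\pi]\times[0,1]\to\mathbb{C}$ appearing in \eqref{eq:ftilde_disk} satisfy $g-h=2f^{-}$ and $g+h=2f^{+}$ on $[0,\pi]\times[0,1]$. Indeed, $g(\phi,\rho)=f(\phi,\rho)=f^{-}(\phi,\rho)+f^{+}(\phi,\rho)$, while $h(\phi,\rho)=f(\phi+\pi,\rho)=f^{+}(\phi+\pi,\rho)+f^{-}(\phi+\pi,\rho)=f^{+}(\phi,\rho)-f^{-}(\phi,\rho)$, using $\pi$-periodicity of $f^{+}$ and $\pi$-antiperiodicity of $f^{-}$ in $\phi$. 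This identifies $f^{-}$ and $f^{+}$ with the odd/even block summands $\tilde{f}^{-}$ and $\tilde{f}^{+}$ of \eqref{eq:bmcI} on the top strip.

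Next I would verify the four cases of \eqref{eq:ftilde_disk} match the prescribed block structure. On the upper two blocks $[0,2\pi]\times[0,1]$, the identification above already gives the top row of the matrices in \eqref{eq:bmcI}. On the lower two blocks $[0,2\pi]\times[-1,0]$, the definition evaluates $g$ and $h$ at $-\rho$; applying $f^{-}(\cdot,-\rho)=-f^{-}(\cdot,\rho)$ and $f^{+}(\cdot,-\rho)=f^{+}(\cdot,\rho)$, together with the $\phi$-shifts $\phi\pm\pi$ and the $\mathsf{flip}$ operation (which reverses the row order, i.e.\ reflects across $\rho=0$), I would bring these evaluations into the forms $\mathsf{flip}(g-h)=2\,\mathsf{flip}(f^{-})$ and $\mathsf{flip}(g+h)=2\,\mathsf{flip}(f^{+})$ demanded by \eqref{eq:bmcI}. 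This establishes that $\tilde{f}$ is BMC in the sense of Definition~\ref{def:BMCfunction}.

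For the BMC-II refinement, the only additional requirement is that $\tilde{f}(\phi,0)$ be constant in $\phi$. Since $f^{-}$ is odd in $\rho$ it vanishes identically at $\rho=0$, so $\tilde{f}(\phi,0)=f^{+}(\phi,0)$, which is constant in $\phi$ by hypothesis. The main obstacle is purely bookkeeping --- consistently tracking the sign flips from the $\rho$-parity, the $\pm\pi$ shifts in $\phi$, and the $\mathsf{flip}$ operation across the four sub-rectangles. No new idea beyond the sphere case is required, which is presumably why the authors chose to omit the detailed computation.
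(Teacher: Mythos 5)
Your proposal is correct and follows the same route the paper intends: the paper omits the proof of this lemma precisely because it is the disk analogue of Lemma~\ref{lemma:bmcI}, whose proof is just ``apply the extension \eqref{eq:ftilde_disk} and verify that the resulting blocks match the decomposition \eqref{eq:bmcI} with the stated parity/periodicity properties,'' which is exactly the bookkeeping you carry out. Your observation that oddness of $f^{-}$ in $\rho$ forces $\tilde{f}(\phi,0)=f^{+}(\phi,0)$ is the right way to obtain the BMC-II refinement.
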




		
	
\section{Barycentric interpolation formulas for the sphere\label{sec:3}}
The plan for constructing interpolants for the sphere that preserve BMC symmetry is to construct barycentric interpolation formulas that match the parity/periodic properties of $f^+$ and $f^-$ discussed in Lemma \ref{lemma:bmcI}.  These formulas can be naturally derived using a tensor product of 1D trigonometric barycentric formulas in $\phi$ and $\theta$.  Before deriving these formulas, we first review several commonly used tensor product grids to which these formulas are specifically tailored.

\subsection{Common tensor product grids}\label{sec:tensor_product}
We consider the following three tensor product latitude-longitude grids on the sphere: equally spaced (EQ), shifted equally spaced (SEQ), and Gauss-Legendre (GL). These are given explicitly as follows:
\begin{align*}
\text{EQ:}\quad (\phi_k, \theta_j)&= \left(\frac{\pi}{m}k,\frac{\pi}{n-1}j\right), \; \quad k=0,\ldots, 2m-1,\; \quad j=0,\ldots, n-1, \\
\text{SEQ:}\quad (\phi_k, \theta_j)&= \left(\frac{\pi}{m}\left(k+\frac{1}{2}\right),\frac{\pi}{n}\left(j+\frac{1}{2}\right)\right), \; \quad k=0,\ldots, 2m-1,\; \quad j=0,\ldots, n-1, \\
\text{GL:}\quad (\phi_k, \theta_j)&= \left(\frac{\pi}{m}k,\arccos (z_j)\right), \; \quad k=0,\ldots, 2m-1,\; \quad j=0,\ldots, n-1. 
\end{align*}
For the GL grid, $z_j$ are roots of the degree $n$ Legendre polynomial. See Figure \ref{fig:sphere_grids} for a visual comparison of these grids.  
\begin{figure}[ht]
\centering
\begin{tabular}{ccc}
\includegraphics[width=0.3\linewidth]{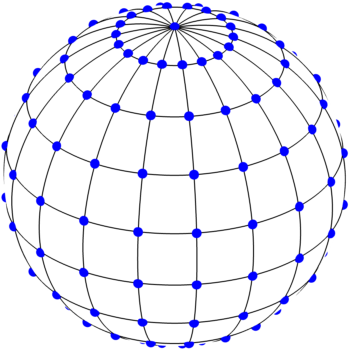} & 
\includegraphics[width=0.3\linewidth]{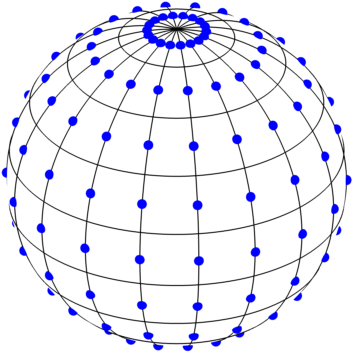} & 
\includegraphics[width=0.3\linewidth]{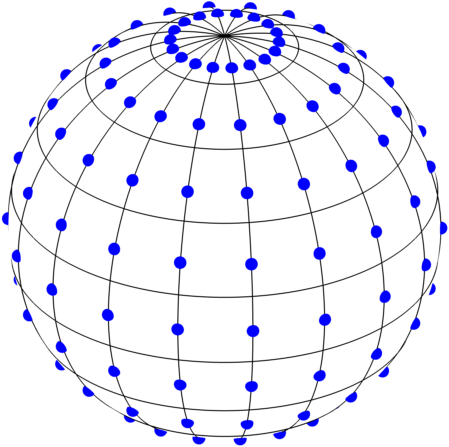} \\
EQ & SEQ & GL
\end{tabular}
\caption{Illustrations of common spherical grids for which the barycentric formulas are applicable.  For the EQ grid $m=n-1=9$, while for the SEQ and GL grids $m=n=9$.  Grid points are marked by blue dots and the solid lines correspond to an EQ grid for reference. \label{fig:sphere_grids}}
\label{fig:1}
\end{figure}

The EQ and SEQ grids are used in numerical weather prediction~\cite{spotz1998fast,LaytonSpotz2003,yee1980studies,Cheong2000261}, astrophysics~\cite{Bruegmann2013216,BartnikNorton,Tichy2006}, and even machine learning~\cite{mai2022sphere2vec}.  GL grids are most commonly used for problems that employ spherical harmonics since Gaussian quadrature rules can be used to exactly integrate  these basis functions~\cite{haiden2016evaluation,adams1999spherepack,shtns}.  

Note that tensor product latitude-longitude grids almost always use equally spaced points in longitude, since then one can naturally use the fast Fourier transform (FFT) to speed-up certain computations in this direction (e.g., differentiation or integration).  We have thus limited our focus to these cases.  Our formulas also require that there are an even number of these longitude points to preserve BMC symmetry in the sampled data.  

\subsection{Barycentric formulas for an even or odd periodic function}
\label{sec:even_oddt}
Let $f_j$, $j=0,\ldots,n-1$ correspond to samples of an even $2\pi$-periodic function at the set of interpolation nodes $\mathcal{S}_n=\{\theta_j\}_{j=0}^{n-1}\subseteq[0,\pi]$, where $\theta_j < \theta_{j+1}$, $j=0,\ldots,n-2$.  Then there exists a unique cosine polynomial $\uc(\theta) $ of degree $n-1$ that interpolates this data and can be written in Lagrange form as~\cite{zygmund2002trigonometric}
\begin{equation}
\label{eq:csp}
\uc(\theta ) =\sum_{j=0}^{n-1} \displaystyle{
\dfrac{\prod_{i=0,i\ne j}^{n-1}\left(\cos \theta - \cos \theta_j\right)}
{\prod_{i=0,i\ne j}^{n-1}\left(\cos \theta_j - \cos \theta_i\right)}f_j.}
\end{equation}
With the aim of reducing the computational cost of this formula, Berrut~\cite{berrut1984baryzentrische} derived the equivalent barycentric formula\footnote{In this formula the value of $u^{\rm c}(\theta_k)$, for any $k=0,\ldots,n-1$, is understood from the limiting the behavior of the formula, which gives $u^{\rm c}(\theta_k)=f_k$.  This same interpretation applies to all remaining barycentric interpolation formulas presented.}:
\begin{equation}
\label{eq:event}
\uc(\theta) = \dfrac{\displaystyle{\sum_{j=0}^{n-1}\dfrac{w_j}{\cos \theta -\cos \theta_j}f_j}}{\displaystyle{\sum_{j=0}^{n-1}\dfrac{w_j}{\cos \theta -\cos \theta_j}}}, \quad \text{where} \quad \displaystyle{w_j = \left(\prod_{\substack{i=0\\i\ne j}}^{n-1}\left(\cos \theta_j - \cos \theta_i\right)\right)^{-1}}.
\end{equation}
If the interpolation nodes $\mathcal{S}_n$ are evenly spaced, as is the case for for the EQ and SEQ grids, then \eqref{eq:event} simplifies into four possible cases that depend on whether $0 \text{ or } \pi$ are in $\mathcal{S}_n$:
\begin{equation}
\label{eq:evene}
\uc(\theta)=\dfrac{\displaystyle{\sum_{j=0}^{n-1}\dfrac{(-1)^j\delta_j\eta_j}{\cos \theta -\cos \theta_j}f_j}}{\displaystyle{\sum_{j=0}^{n-1}\dfrac{(-1)^j\delta_j\eta_j}{\cos \theta -\cos \theta_j}}}, \quad \text{where} \;
\eta_j = 
\begin{cases}
\sin \theta_j, & 0\not\in \mathcal{S}_n,  \pi \not\in \mathcal{S}_n,\\[3pt]
\cos \frac{\theta_j}{2}, &  0 \in \mathcal{S}_n ,  \pi \notin \mathcal{S}_n, \\[3pt]
\sin \frac{\theta_j}{2}, & 0\notin \mathcal{S}_n,  \pi \in \mathcal{S}_n, \\[3pt]
1, & 0\in \mathcal{S}_n, \pi \in \mathcal{S}_n,
\end{cases},
\quad\delta_j = \begin{cases}
\frac{1}{2}, & \theta_j = 0 \text{ or } \theta_j = \pi,\\
1, & \text{otherwise}.
\end{cases}
\end{equation}
	
If the samples $f_0,\ldots,f_{n-1}$ come instead from an odd $2\pi-$periodic function sampled at $\mathcal{S}_n$, then there exists a unique sine polynomial $\us(\theta)$ of degree $n-1$ that interpolates the data and can be written in Lagrange form as \cite{zygmund2002trigonometric}
\begin{equation}
\label{eq:ssp}
\us(\theta ) =\sum_{j=0}^{n-1} \displaystyle{
\dfrac{\sin \theta\prod_{i=0,i\ne j}^{n-1}\left(\cos \theta - \cos \theta_j\right)}
{\sin \theta_j\prod_{i=0,i\ne j}^{n-1}\left(\cos \theta_j - \cos \theta_i\right)}f_j.}
\end{equation}
Berrut~\cite{berrut1984baryzentrische} also derived equivalent barycentric formulas for this case that again depend on whether $0$ and $\pi$ are in $\mathcal{S}_n$:
\begin{equation}
\label{eq:oddt}
\us(\theta) = 
\begin{cases}
\sin \theta 
\dfrac{\displaystyle{\sum_{j=0}^{n-1} \dfrac{w_j\sin \theta_j}{\cos \theta -\cos \theta_j}f_j}}{\displaystyle{\sum_{j=0}^{n-1}\dfrac{w_j\sin^2\theta_j}{\cos \theta- \cos \theta_j}}}, & 0\in \mathcal{S}_n , \pi \in \mathcal{S}_n,\\
\sin \theta \dfrac{\displaystyle{\sum_{j=0}^{n-1} \dfrac{w_j\csc \theta_j}{\cos \theta -\cos \theta_j}f_j}}{\displaystyle{\sum_{j=0}^{n-1}\dfrac{w_j}{\cos \theta- \cos \theta_j}}}, & \text{otherwise},
\end{cases}
\end{equation}
where $w_j$ are given in \eqref{eq:event}. 
Similar to the even periodic formulas, when $\mathcal{S}_n$ contains equally spaced nodes \eqref{eq:oddt} simplifies considerably to
\begin{equation}
\label{eq:odde}
\us(\theta)=\sin \theta \dfrac{\displaystyle{\sum_{j=0}^{n-1}(-1)^{j}\dfrac{\xi_j}{\cos \theta - \cos \theta_j}f_j}}{\displaystyle{\sum_{j=0}^{n}(-1)^{j}\dfrac{\xi_j \sin \theta_j}{\cos \theta - \cos \theta_j}}}, \quad \text{where} \quad
\xi_j = 
\begin{cases}
1, & 0\notin \mathcal{S}_n , \pi \notin \mathcal{S}_n,\\[3pt]
\sin \frac{\theta_j}{2},& 0 \in \mathcal{S}_n , \pi \notin \mathcal{S}_n,\\[3pt]
\cos \frac{\theta_j}{2}, & 0\notin \mathcal{S}_n, \pi\in \mathcal{S}_n,\\[3pt]
\sin \theta_j, & 0 \in \mathcal{S}_n, \pi \in \mathcal{S}_n.
\end{cases}
\end{equation}

We note that when $\mathcal{S}_n$ contains transformed GL points, $\theta_j = \arccos(z_j)$, the barycentric weights in both \eqref{eq:event} and \eqref{eq:oddt} correspond to the standard standard polynomial barycentric weights for the GL points.  These weights can be computed efficiently (and stably) from GL quadrature weights, or, for large $n$, using an explicit asymptotic formula~\cite{bogaert2014iteration}.  In our work, we use the barycentric GL weights computed in the \texttt{legpts} function of Chebfun~\cite{driscoll2014chebfun}, which uses a variation of these formulas.

	
\subsection{Barycentric formulas for a $\pi$-periodic or $\pi$-antiperiodic function}\label{sec:periodic_aperiodic}
We now consider the case where the grid points $\phi_k$ are equally spaced and the samples $\{f_j\}_{j=0}^{2m-1}$ come from a $\pi$-periodic or $\pi$-antiperiodic function $f(\phi)$.  In both cases, $f$ is also $2\pi$-periodic so that the standard trigonometric barycentric formula derived by Henrici~\cite{henrici1979barycentric} could be used:
	\begin{equation}
		\label{eq:bary_trig}
		p(\phi)=
		\dfrac{\displaystyle{\sum_{k=0}^{2m-1}(-1)^{k} \cot \frac{\phi-\phi_k}{2}f_k}}{\displaystyle{\sum_{k=0}^{2m-1}(-1)^{k}\cot \frac{\phi-\phi_k}{2}}}.
	\end{equation}
However, we can exploit the periodicity to reduce the computational cost of this formula, since in the $\pi$-periodic case $f_{k+m} = f_{k}$ and in the antiperiodic case $f_{k+m} = -f_{k}$, for $k=0,\ldots,m-1$.  The resulting $\pi$-periodic barycentric formula is trivially then just a rescaling and truncation of the sums in \eqref{eq:bary_trig}: 
	\begin{equation}
		\label{eq:pip}
		p^{+}(\phi) =
		\begin{cases}
			\dfrac{\displaystyle{\sum_{k=0}^{m-1} (-1)^{k} \cot \left(\phi -\phi_k\right)f_k}}{\displaystyle{\sum_{k=0}^{m-1}(-1)^{k} \cot \left(\phi -\phi_k\right)}}, & m \text{ even},\\
			\dfrac{\displaystyle{\sum_{k=0}^{m-1} (-1)^{k} \csc \left(\phi -\phi_k\right)f_k}}{\displaystyle{\sum_{k=0}^{m-1} (-1)^{k}\csc(\phi-\phi_k)}}, & m \text{ odd}.
		\end{cases}
	\end{equation}
The derivation of the barycentric formula for the $\pi$-antiperiodic interpolant requires a little more effort.  Starting with \eqref{eq:bary_trig} and exploiting the relationship $f_{k+m} = -f_{k}$, $k=0,\ldots,m-1$, we obtain
	\begin{align*}
		p^{-}(\phi) &= 
		\dfrac{\displaystyle{\sum_{k=0}^{m-1}\left[(-1)^{k}\cot \left(\frac{\phi -\phi_k}{2}\right)f_k
				+(-1)^{m+k}\cot \left(\frac{\phi -\phi_{k+m}}{2}\right)f_{k+m}\right]}}
		{\displaystyle{\sum_{k=0}^{m-1}\left[(-1)^{k}\cot \left(\frac{\phi-\phi_k}{2}\right)
				+(-1)^{m+k}\cot \left(\frac{\phi -\phi_{k+m}}{2}\right)\right]}}\\
		&=\dfrac{\displaystyle{\sum_{k=0}^{m-1} (-1)^{k}\left[\cot \left(\frac{\phi -\phi_k}{2}\right)
				-(-1)^{m}\cot \left(\frac{\phi -\phi_k }{2}-\frac{\pi}{2}\right)\right]f_k}}{\displaystyle{\sum_{k=0}^{m-1} (-1)^{k}\left[\cot \left(\frac{\phi -\phi_k}{2}\right)
				+(-1)^{m}\cot \left(\frac{\phi -\phi_k }{2}-\frac{\pi}{2}\right)\right]}}\\
		&= \dfrac{\displaystyle{\sum_{k=0}^{m-1}(-1)^k\left[\cot \left(\frac{\phi -\phi_k}{2}\right) +(-1)^m\tan \left(\frac{\phi -\phi_k}{2}\right)\right]f_k}}
		{\displaystyle{\sum_{k=0}^{m-1}(-1)^k \left[\cot \left(\frac{\phi -\phi_k}{2}\right)- (-1)^m\tan \left(\frac{\phi -\phi_k}{2}\right)\right]}},
	\end{align*}
where in the last equality we have used $\cot \left( \alpha -\frac{\pi}{2}\right) = -\tan \left(\alpha\right)$. By further using the identities $\cot \alpha + \tan \alpha = 2/\sin (2\alpha)$ and $\cot \alpha - \tan \alpha = 2\cot (2 \alpha)$, we obtain the simplified formula
	\begin{equation}
		\label{eq:anti}
		p^{-}(\phi)=
		\begin{cases}
			\dfrac{\displaystyle{\sum_{k=0}^{m-1}(-1)^{k} \csc \left(\phi -\phi_k\right)f_k} }{
				\displaystyle{	\sum_{k=0}^{m-1} (-1)^{k}\cot \left(\phi -\phi_k\right)}}, & m \text{ even}, \\
			\dfrac{\displaystyle{\sum_{k=0}^{m-1} (-1)^{k}\cot \left(\phi -\phi_k\right)f_k}}{\displaystyle{\sum_{k=0}^{m-1} (-1)^{k} \csc \left(\phi -\phi_k\right)}}, & m \text{ odd}.
		\end{cases}
	\end{equation}
	
	
	\subsection{Barycentric formulas for the sphere}
Let $f_{j,k}$ denote samples of a function $f(\phi,\theta)$ on one of the grids $(\phi_k,\theta_j)$, $j=0,\ldots,n-1$, $k=0,\ldots,2m-1$, discussed in Section \ref{sec:tensor_product}~\footnote{Note that we denote the samples of $f$ as $f_{j,k}$ instead of $f_{k,j}$ to match the standard array indexing one uses for 2D grids, where the first index corresponds to the ``vertical'' coordinate $\theta$ and the second index corresponds to the ``horizontal'' coordinate $\phi$}. Then the samples of $f^-(\phi,\theta)$ and $f^+(\phi,\theta)$ from Lemma \ref{lemma:bmcI} are given as 
\begin{align}
f_{j,k}^{-} = \frac12(f_{j,k}-f_{j,k+m}) \;\text{and}\; f_{j,k}^{+} = \frac12(f_{j,k}+f_{j,k+m}),\; j=0,\ldots,n-1,\; k=0,\ldots,m-1.
\label{eq:fplus_fminus}
\end{align}  To interpolate these sets of data we use bivariate trigonometric interpolants built from tensor products of the 1D formulas from the previous two sections.  For the $f^{-}_{jk}$ data, we combine the formulas for the $\pi$-anti-periodic interpolant in $\phi$ and odd periodic interpolant in $\theta$, while for the $f^{+}_{jk}$ data, we combine the formulas for the $\pi$-periodic interpolant in $\phi$ and even periodic interpolant in $\theta$.  These formulas can be written respectively as
\begin{equation}
	\begin{aligned}
	\label{eq:fplusminus}
	\sphere^-(\phi, \theta) &=
		\dfrac{\displaystyle{\sum_{k=0}^{m-1} (-1)^k \csc \left(\phi - \phi_k\right)\us_{k}(\theta)}}{\displaystyle{\sum_{k=0}^{m-1}(-1)^k \cot \left(\phi - \phi_k\right)}}
\;\text{\&}\;
	\sphere^+(\phi, \theta) =
		\dfrac{\displaystyle{\sum_{k=0}^{m-1} (-1)^k \cot \left(\phi -\phi_k\right)\uc_{k}(\theta)}}{\displaystyle{\sum_{k=0}^{m-1}(-1)^k \cot \left(\phi -\phi_k\right)}},\;  \text{ if $m$ is even,} \\
	\sphere^-(\phi, \theta) &=
		\dfrac{\displaystyle{\sum_{k=0}^{m-1} (-1)^k \cot \left(\phi - \phi_k\right)\us_{k}(\theta)}}{\displaystyle{\sum_{k=0}^{m-1} (-1)^k \csc \left(\phi - \phi_k\right)}}
\;\text{\&}\;
	\sphere^+(\phi, \theta) =
		\dfrac{\displaystyle{\sum_{k=0}^{m-1} (-1)^k  \csc \left(\phi -\phi_k\right)\uc_{k}(\theta)}}{\displaystyle{\sum_{k=0}^{m-1} (-1)^k\csc(\phi-\phi_k)}},\;  \text{ if $m$ is odd} 
	\end{aligned}
\end{equation}
where the choice for $\us_k(\theta)$ and $\uc_k(\theta)$ in these formulas depends on the grid type, as summarized in Table \ref{tbl:lat_formulas}.  

\begin{table}[htb]
\centering
\begin{tabular}{|c||c|c|}
\hline 
Grid & $u^{\rm c}_k(\theta)$ & $u^{\rm s}_k(\theta)$ \\ 
\hline 
\hline	
EQ & $\dfrac{\displaystyle{\sideset{}{_{\prime}'}\sum_{j=0}^{n-1}\dfrac{(-1)^j}{\cos \theta -\cos \theta_j}f_{j,k}^+}}{\displaystyle{\sideset{}{_{\prime}'}\sum_{j=0}^{n-1}\dfrac{(-1)^j}{\cos \theta -\cos \theta_j}}}$ & 
$\sin \theta \dfrac{\displaystyle{\sum_{j=0}^{n-1}\dfrac{(-1)^{j}\sin \theta_j}{\cos \theta - \cos \theta_j}f_{j,k}^-}}{\displaystyle{\sum_{j=0}^{n-1}\dfrac{ (-1)^{j}\sin^2 \theta_j}{\cos \theta - \cos \theta_j}}}$ \\
\hline
SEQ &  $\dfrac{\displaystyle{\sum_{j=0}^{n-1}\dfrac{(-1)^j\sin \theta_j}{\cos \theta -\cos \theta_j}f_{j,k}^+}}{\displaystyle{\sum_{j=0}^{n-1}\dfrac{(-1)^j\sin \theta_j}{\cos \theta -\cos \theta_j}}}$ & 
$\sin \theta \dfrac{\displaystyle{\sum_{j=0}^{n-1}\dfrac{(-1)^{j}}{\cos \theta - \cos \theta_j}f_{j,k}^-}}{\displaystyle{\sum_{j=0}^{n-1}\dfrac{(-1)^{j} \sin \theta_j}{\cos \theta - \cos \theta_j}}}$ \\
\hline
GL & $ \dfrac{\displaystyle{\sum_{j=0}^{n-1}\dfrac{w_j}{\cos \theta -\cos \theta_j}f_{j,k}^+}}{\displaystyle{\sum_{j=0}^{n-1}\dfrac{w_j}{\cos \theta -\cos \theta_j}}}$ &
$\sin \theta \dfrac{\displaystyle{\sum_{j=0}^{n-1} \dfrac{w_j\csc \theta_j}{\cos \theta -\cos \theta_j}f_{j,k}^-}}{\displaystyle{\sum_{j=0}^{n-1}\dfrac{w_j}{\cos \theta- \cos \theta_j}}}$
\\
\hline
\end{tabular}
\caption{Barycentric trigonometric interpolants in latitude to the data $f_{j,k}^{+}$ and $f_{j,k}^{-}$ given in \eqref{eq:fplus_fminus} for the various grids under consideration. The lower and upper prime on a summation sign means the first and last terms are halved, and the formulas for $w_j$ are given in \eqref{eq:event}.\label{tbl:lat_formulas}}
\end{table}


To construct a barycentric trigonometric interpolant of the data $f_{jk}$, we combine the interpolants for $\sphere^{-}$ and $\sphere^{+}$ as in \eqref{eq:bmcI}, $\sphere(\phi,\theta) = \sphere^+(\phi,\theta)+\sphere^-(\phi , \theta)$, and simplify to arrive at 
\begin{align}
	\label{eq:bary_sphere}
	\sphere(\phi , \theta) = \begin{cases}
		\dfrac{\displaystyle{\sum_{k=0}^{m-1} (-1)^k \left[\cot \left(\phi - \phi_k\right)\uc_k(\theta) + \csc \left(\phi - \phi_k\right)\us_k(\theta)\right]}}{\displaystyle{\sum_{k=0}^{m-1} (-1)^k\cot \left(\phi - \phi_k\right)}}, & m \text{ even,}\\
		\dfrac{\displaystyle{\sum_{k=0}^{m-1}(-1)^k \left[\csc\left(\phi - \phi_k\right)\uc_k(\theta) + \cot \left(\phi - \phi_k\right)\us_k(\theta)\right]}}
		{\displaystyle{\sum_{k=0}^{m-1}(-1)^k \csc \left(\phi - \phi_k\right)}}, & m \text{ odd. }
	\end{cases}
\end{align}
\begin{theorem}\label{thm:bmc_sphere}
The interpolant $\sphere:[0,2\pi]\times[0,\pi]\rightarrow\mathbb{C}$ to the data $f_{jk}$, $j=0,\ldots,n-1$, $k=0,\ldots,2m-1$, sampled from a continuous function $f$ on the sphere at the EQ, SEQ, or GL grids, is a BMC function.  Furthermore, in the case of the EQ grid, $\sphere$ is a BMC-I function.
\end{theorem}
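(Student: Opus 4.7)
The plan is to apply Lemma \ref{lemma:bmcI}: I will show that $\sphere^+$ and $\sphere^-$ from \eqref{eq:fplusminus}, whose sum is $\sphere$ by the construction preceding \eqref{eq:bary_sphere}, carry the parity and periodicity properties required by that lemma. The interpolation conditions $\sphere(\phi_k,\theta_j)=f_{j,k}$ follow from the tensor-product construction of the 1D barycentric formulas and so need no separate argument; only the symmetry verification is substantive.

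For the dependence on $\theta$, every $\uc_k(\theta)$ in Table \ref{tbl:lat_formulas} is a rational function of $\cos\theta$ and therefore even and $2\pi$-periodic in $\theta$, while every $\us_k(\theta)$ has an explicit factor of $\sin\theta$ multiplying a rational function of $\cos\theta$ and so is odd and $2\pi$-periodic in $\theta$. These properties pass linearly through the ratios in \eqref{eq:fplusminus} since the $\phi$-dependent coefficients are independent of $\theta$, giving that $\sphere^+$ is even $2\pi$-periodic in $\theta$ and $\sphere^-$ is odd $2\pi$-periodic in $\theta$. For the dependence on $\phi$, I would use the identities $\cot(\alpha+\pi)=\cot\alpha$ and $\csc(\alpha+\pi)=-\csc\alpha$, treating $m$ even and $m$ odd separately. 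In the $\sphere^+$ formulas, the numerator and denominator use the same trigonometric kernel ($\cot$ when $m$ is even, $\csc$ when $m$ is odd), so under $\phi\mapsto\phi+\pi$ they are either both invariant or both sign-flipped, yielding $\pi$-periodicity of $\sphere^+$. In the $\sphere^-$ formulas the numerator and denominator use opposite kernels, so exactly one of them changes sign under $\phi\mapsto\phi+\pi$, yielding $\pi$-antiperiodicity of $\sphere^-$. Applying Lemma \ref{lemma:bmcI} then establishes the BMC property for all three grids.

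For the BMC-I claim in the EQ case, I additionally need $\sphere^+$ to be constant in $\phi$ at $\theta=0$ and $\theta=\pi$. Since $\theta_0=0$ and $\theta_{n-1}=\pi$ are nodes of the EQ grid and $f$ is sampled from a function continuous on the sphere, every sample $f_{0,k}$ equals the single value of $f$ at the north pole and every $f_{n-1,k}$ equals the value at the south pole; hence $f^+_{0,k}$ and $f^+_{n-1,k}$ are independent of $k$. By the interpolation property of the EQ formula for $\uc_k$ in Table \ref{tbl:lat_formulas} (understood in the limiting sense at coincident nodes, per the paper's footnote convention), $\uc_k(0)$ and $\uc_k(\pi)$ equal those common pole values. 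Factoring the common constant out of the numerator sum in \eqref{eq:fplusminus} makes the numerator an exact constant multiple of the denominator, so $\sphere^+(\phi,0)$ and $\sphere^+(\phi,\pi)$ are constants in $\phi$. The second clause of Lemma \ref{lemma:bmcI} then upgrades the conclusion from BMC to BMC-I.

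The main technical nuisance is careful bookkeeping across the two parities of $m$ in the $\phi$-periodicity step and handling the limit values at coincident nodes (in $\phi$ at $\phi=\phi_k$, and in $\theta$ at the poles for the EQ case), where individual terms blow up but the ratios remain finite. None of this is deep, but the signs must be tracked carefully; once the parity-periodicity catalogue is in hand, Lemma \ref{lemma:bmcI} does all the structural work.
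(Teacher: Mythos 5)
Your proposal is correct and follows essentially the same route as the paper: the BMC property is obtained by verifying the parity/periodicity hypotheses of Lemma \ref{lemma:bmcI} for $\sphere^{\pm}$ (which the paper simply asserts follows immediately, while you spell out the $\cot$/$\csc$ sign bookkeeping and the evenness/oddness of $\uc_k$, $\us_k$ in $\cos\theta$ and $\sin\theta$), and the BMC-I upgrade on the EQ grid is obtained exactly as in the paper, from the constancy of the pole samples, the interpolation property of $\uc_k$ at $\theta=0,\pi$, and the exactness of the $\pi$-periodic $\phi$-formula for constants. No gaps.
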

\begin{proof}
The first result follows immediately from Lemma \ref{lemma:bmcI}.  The second result follows by noting that since $f$ is continuous on the sphere, then on the EQ grid, $f_{0,k} = \beta$ and $f_{n-1,k} = \gamma$, $k=0,\ldots,2m-1$, for some $\beta,\gamma\in\mathbb{C}$.  So, $f_{0,k}^{+}=\beta$, $f_{n-1,k}^{+} = \gamma$, and $f_{0,k}^{-}=f_{n-1,k}^{-}=0$, $k=0,\ldots,m$.  Since $c_k(\theta)$ are interpolants, we necessarily have that $c_k(\theta_0) = c_k(0)=\beta$ and $c_k(\theta_{n-1}) = c_k(\pi)=\gamma$, $k=0,\ldots,m$.  Now, $\sphere^{+}$ is constructed from $p^{+}(\phi)$ in \eqref{eq:pip} and $p^{+}$ is exact for constants, thus $\sphere^{+}(\phi,0)=\beta$ and $\sphere^{+}(\phi,\pi)=\gamma$ all $\phi$.  Finally, $\sphere^{-}(\phi,0) = \sphere^{-}(\phi,\pi) = 0$ for all $\phi$ since all $s_k(\theta)$ are odd $2\pi$-periodic functions.
\end{proof}

\begin{remark}
The interpolant \eqref{eq:bary_sphere} is also more generally applicable to other tensor product grids that use equally spaced points in longitude but different points in latitude than EQ, SEQ, or GL.  The formulas in this more general case are based on $c_k$ given in the third row of Table \ref{tbl:lat_formulas} and $s_k$ given in \eqref{eq:oddt}, with the weights $w_j$ computed according to the given points in latitude.  If these latitude points contain $0$ and $\pi$, then the interpolant will be a BMC-I function.
\end{remark}

For the EQ and SEQ grids, one can also obtain a bivariate trigonometric interpolant of a continuous function $f$ on the sphere from the bivariate discrete Fourier transform (equivalently the FFT) of the samples of $f$'s DFS extension (and a doubling of the grids to cover $-\pi < \theta < 0$).  From the uniqueness of the FFT~\cite{wright2015extension} and the barycentric interpolant \eqref{eq:bary_sphere}, these two must be equivalent.  The approximation properties of \eqref{eq:bary_sphere} for the EQ and SEQ grids then follow directly from~\cite{mildenberger2021approximation}.  In particular, for interpolating Hölder continuous functions on the sphere one obtains algebraic rates of convergence as $n$ and $m$ increase, while for infinitely smooth functions, one obtains convergence rates faster than any algebraic rates.  

Note that for the SEQ and GL grids that do not contain the poles ($\theta=0$ and $\theta=\pi$) as interpolation points, the interpolant \eqref{eq:bary_sphere} is not guaranteed to be single-valued at the poles.  While in practice this does not appear to degrade the convergence rates of these interpolants proved in~\cite{mildenberger2021approximation}, as illustrated in the next section, it could lead to issues when trying to differentiate the interpolated values in the vicinity of the poles.  

We conclude by noting that a benefit of using the barycentric interpolant \eqref{eq:bary_sphere} over the equivalent interpolant written in terms of a bivariate Fourier series (cf.~\cite{townsend2016computing}) is that it does not require determining the Fourier coefficients.  For the EQ and SEQ grids, this amounts to saving the computation of a bivariate FFT (and complex arithmetic, as used in most standard FFT algorithms).  However, for the GL grid (or other grids that use non-equally spaced points in latitude), one would need to resort to a non-uniform FFT (NUFFT) (e.g.~\cite{ruiz2018nonuniform}), which may further increase the cost of the bivariate Fourier series approach.

\subsection{Numerical example}
To demonstrate that the barycentric interpolation formulas \eqref{eq:bary_sphere} work on the sphere, we consider using them to interpolate the function
\begin{equation}
\label{eq:funs}
f(\phi,\theta) = \cos \left(1 + 8\pi(\cos\phi + \sin\phi)\sin\theta + 5\sin(3\pi\cos\theta)\right);
\end{equation}
see Figure \ref{fig:errs_sphere} (a) for a visualization of this function.  
\begin{figure}[ht]
	\centering
	\begin{tabular}{cc}
	\includegraphics[width=0.4\linewidth]{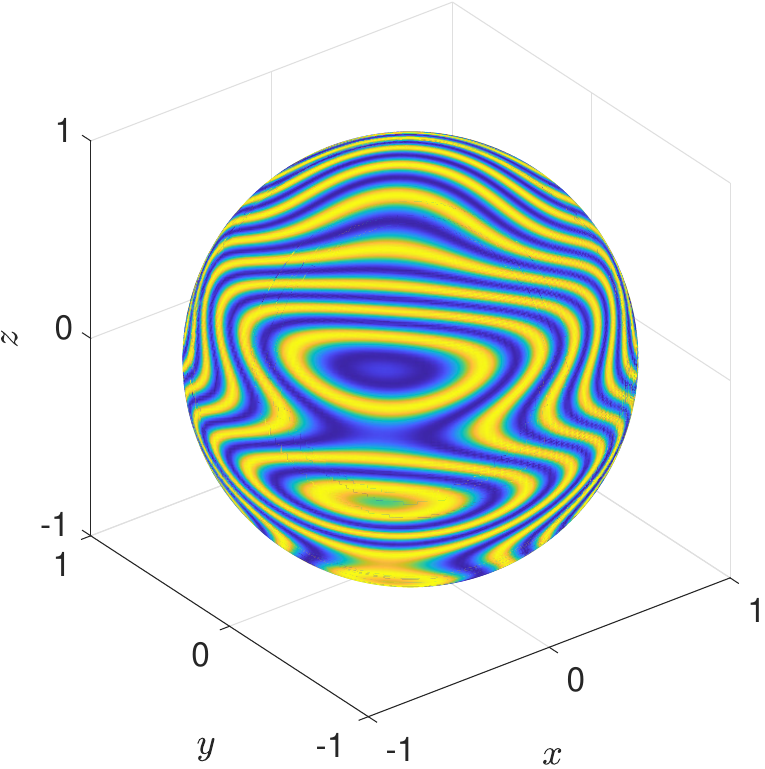} &
	\includegraphics[width=0.51\linewidth]{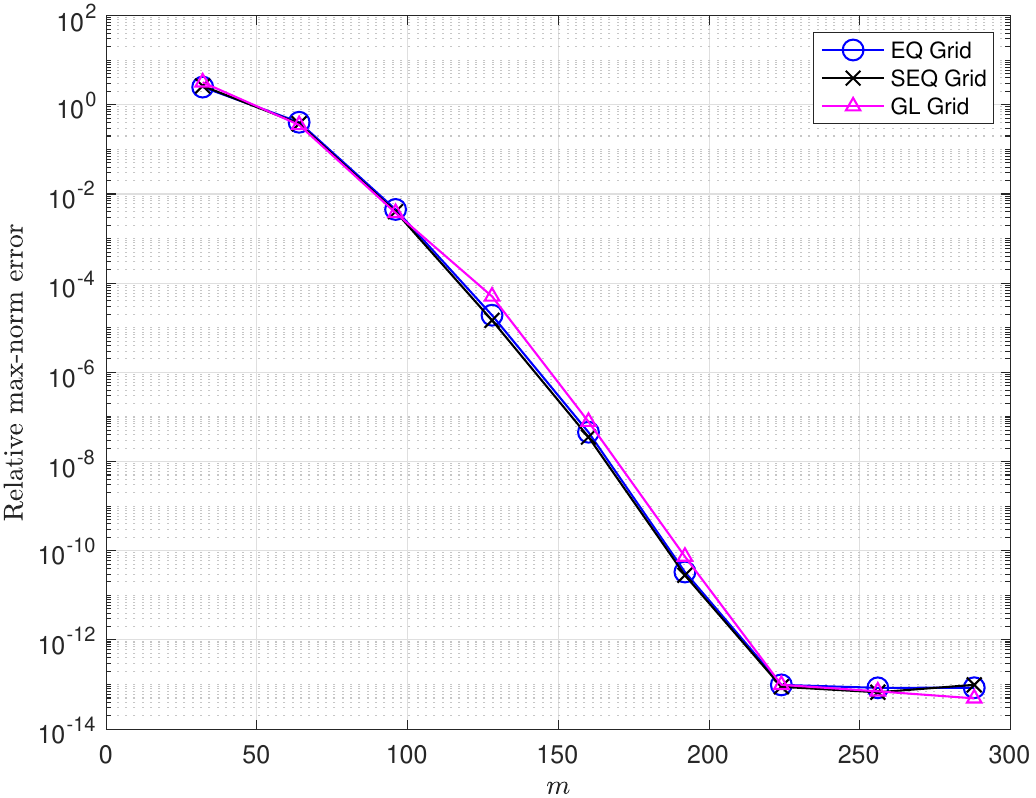} \\
	(a) & (b)
	\end{tabular}
	\caption{(a) Test function \eqref{eq:funs} on the sphere, where dark blue corresponds to -1 and bright yellow to 1. (b) Relative max-norm error in the barycentric interpolants \eqref{eq:bary_sphere} of the function in (a) for the different grids using $n = m$.\label{fig:errs_sphere}}
\end{figure}
For the numerical experiment, we select $n = m$, which gives $N=2n^2$ total grid points and samples of $f$, and evaluate the barycentric formulas \eqref{eq:bary_sphere} at a dense set of scattered points on the sphere where we compute the relative max-norm error between $\sphere$ and $f$.  Figure \ref{fig:errs_sphere} (b) displays these errors as a function of $m$ for each of the three grids under consideration.  We see the max-norm errors are similar for each of the three different grid types and that the errors decrease exponentially fast with $m$, which is expected from~\cite{mildenberger2021approximation}, since $f$ is infinitely smooth.
	
\section{Barycentric interpolation formulas for the disk}
\label{sec:bary_disk}
Similar to the sphere, the plan is to construct barycentric interpolation formulas for the disk that match the parity/periodic properties of $f^{+}$ and $f^{-}$ discussed in Lemma \ref{lemma:bmcII}.  These will again be constructed from tensor product formulas, but now consisting of 1D trigonometric barycentric formulas in $\phi$ and 1D polynomial barycentric formulas in $\rho$. The formulas for $\phi$ are identical to those derived in Section \ref{sec:periodic_aperiodic}, so only the 1D polynomials formulas need to be derived here.  Prior to this, we first review some commonly used tensor product girds that the barycentric formulas for the disk are specifically tailored.

\subsection{Common tensor product grids\label{sec:tensor_product_disk}}
We will again consider three commonly used grid types on the disk. In all the grids, the points are equally distributed in the angular direction, and we will name them depending on the nature of points in the radial direction. These are Chebyshev points of the first (CH1) and second (CH2) kinds, and Gauss-Legendre (GL) points~\cite{trefethen2019approximation}.  For given positive integers $m$ and $n$, we define these grids as
\begin{align*}
	\text{CH1:} \quad \left(\phi_k,\rho_j\right)&= \left(\frac{\pi}{m}k,\cos\left(\frac{j+\frac12}{\ell+1}\pi\right)\right), \quad k=0,\dots , 2m-1,\quad j=0,\dots, n, \\[3pt]
	\text{CH2:} \quad \left(\phi_k,\rho_j\right)&= \left(\frac{\pi}{m}k,\cos\left(\frac{j}{\ell}\pi\right)\right), \quad k=0,\dots , 2m-1,\quad j=0,\dots , n, \\[3pt]
	\text{GL:} \quad \left(\phi_k,\rho_j\right)&= \left(\frac{\pi}{m}k,z_j\right), \quad k=0,\dots , 2m-1, \quad j=0,\dots , n,
\end{align*}
where $\ell=2n$ if the origin is to be included ($\rho_{n}=0$) in the grid or $\ell=2n+1$ if it is to be omitted.  Also, $z_j$ are the $n+1$ roots in $[0,1]$ of the degree $\ell+1$ Legendre polynomial $P_{\ell+1}$ defined over $[-1,1]$, and ordered so that $z_{j} > z_{j+1}$, $j=0,\ldots,n$.  See~\cite{boyd2011comparing} for a thorough review of applications that use these types of grids.
\begin{figure}[ht]
	\centering
	\begin{tabular}{cc}
	\includegraphics[width=0.3\linewidth]{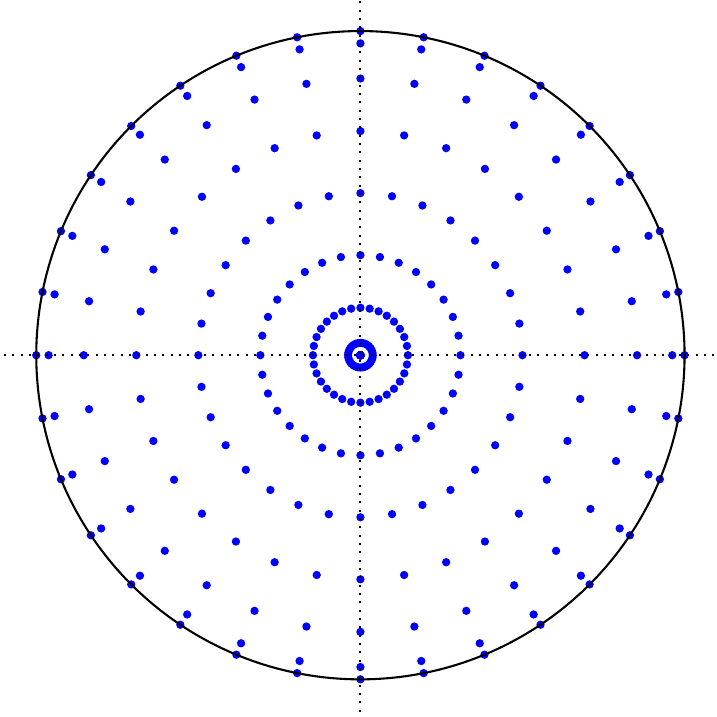} &
	\includegraphics[width=0.3\linewidth]{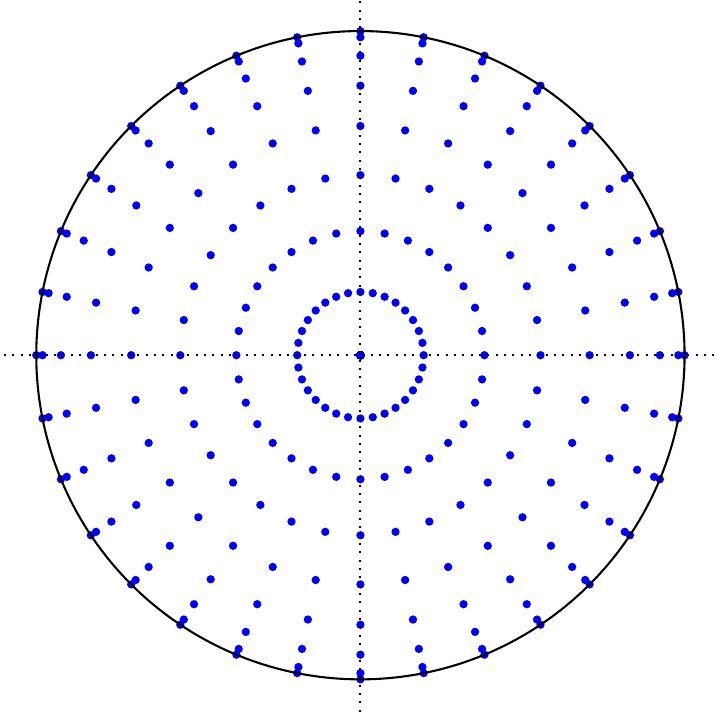} \\
	(a) & (b)
	\end{tabular}
	\caption{Comparison of Fouier-Chebyshev grids, where the points in the radial direction are (a) the Chebyshev points (of the second kind) defined over $[0,1]$ and (b) the Chebyshev points (of the second kind) defined over $[-1,1]$ and restricted to $[0,1]$ as in the CH2 definition.  Both grids have the same number of points in the polar and radial directions.\label{fig:disk_pts}}
\end{figure}

In all cases, the points in the radial direction $\rho$ are the $\ell+1$ roots or extrema in $[0,1]$ of orthogonal polynomials defined over $[-1,1]$.  As discussed in Section \ref{sec:dfs}, when the DFS method is applied to functions sampled at these points, the full set of points over $[-1,1]$ is recovered, allowing polynomials in $\rho$ defined over these points to be used.  As shown in Figure \ref{fig:disk_pts}, this approach results in less clustering around the origin compared to using radial points based on roots/extrema of orthogonal polynomials defined on $[0,1]$, as done in~\cite{berrut2021linear}.  Furthermore, as we demonstrate in Section \ref{sec:bary_formula_disk}, this approach allows for the construction of barycentric interpolants on the disk that naturally preserve BMC symmetry.


\subsection{Barycentric formulas for an even or odd function}
\label{sec:even_odd}
Given samples $f_j$, $j=0,\ldots,\ell$, of a function $f$ at the the interpolation points $\rho_j$, $j=0,\ldots,n$, the unique polynomial of degree $\ell$ that interpolates the data can be written in barycentric form as~\cite{berrut2004barycentric}
\begin{align}
	v(\rho) = \dfrac{\displaystyle{\sum_{j=0}^{\ell} \frac{w_j}{\rho-\rho_j}f_j}}{\displaystyle{\sum_{j=0}^\ell\frac{w_j}{\rho-\rho_j}}}, \quad w_j =  \left(\ds \prod_{\substack{i=0 \\ i\ne j}}^{\ell} (\rho_j -\rho_i)\right)^{-1},
\label{eq:bary_poly}
\end{align}
If $f$ is even or odd and the interpolation points are symmetric about the origin, then we can exploit these symmetries to reduce the sums in determining $v$ and the products in computing the weights to be over roughly half the terms.  In deriving these formulas, we assume, without loss of generality, that $1 \geq \rho_0 > \rho_1 \cdots > \rho_{\ell} \geq -1$, and that $\ell=2n$ or $\ell=2n+1$.  In the former case, the number of interpolation points is odd, and the assumed symmetry about the origin implies that the origin is included in the point set and $\rho_n=0$.  In the latter case, the origin is not included.  In either case, symmetry about the origin implies $\rho_{j}=-\rho_{\ell-j}$, $j=0,\ldots,n$.  Similar to Section \ref{sec:even_oddt}, we let $\mathcal{S}_n = \{\rho_j\}_{j=0}^n$ be the set of interpolation points that are actually used in the formulas.

We first consider the case that $f$ is even.  To derive the barycentric formula for this case, it is important to distinguish whether $\ell$ is even or odd.  Assuming first that $\ell=2n$, so that the origin is included,  it is straightforward to show that for $j=0,\ldots,n-1$ the barycentric weights satisfy $w_{j} = w_{\ell-j}$, and the function samples satisfy $f_{j} = f_{\ell-j}$.  Applying these results to \eqref{eq:bary_poly} and denoting the interpolant as $\ue$ gives:
\begin{align}
\ue(\rho) &=  \dfrac{\ds \frac{w_n}{\rho - \rho_n}f_n + \sum_{j=0}^{n-1} \left[\frac{w_j}{\rho - \rho_j}f_j + \frac{w_{\ell-j}}{\rho -\rho_{\ell-j}}f_{\ell-j}\right]}{\ds \frac{w_n}{\rho - \rho_n} + \sum_{j=0}^{n-1} \left[\frac{w_j}{\rho - \rho_j}+\frac{w_{\ell-j}}{\rho -\rho_{\ell-j}}\right]}
	=
\dfrac{\ds \frac{w_n}{\rho}f_n + \sum_{j=0}^{n-1} w_j \left[\frac{1}{\rho - \rho_j}+\frac{1}{\rho + \rho_j}\right]f_j}{\ds   \frac{w_n}{\rho} + \sum_{j=0}^{n-1} w_j \left[\frac{1}{\rho - \rho_j}+\frac{1}{\rho + \rho_j}\right]} \nonumber \\
        & = 
\dfrac{\ds\frac{\rho w_n}{\rho^2}f_n+\sum_{j=0}^{n-1} \dfrac{2\rho w_j}{\rho^2-\rho_j^2}f_j}{\ds\frac{w_n\rho}{\rho^2}+\sum_{j=0}^{n-1} \dfrac{2\rho w_j}{\rho^2-\rho_j^2}} = \dfrac{\ds2\rho\left(\frac{w_n}{2\rho^2}f_n+\sum_{j=0}^{n-1} \dfrac{w_j}{\rho^2-\rho_j^2}f_j\right)}{\ds2\rho\left(\frac{w_n}{2\rho^2}+\sum_{j=0}^{n-1} \dfrac{w_j}{\rho^2-\rho_j^2}\right)} = 
\dfrac{\ds\sideset{}{'}\sum_{j=0}^{n} \dfrac{w_j}{\rho^2-\rho_j^2}f_j}{\ds\sideset{}{'}\sum_{j=0}^{n} \dfrac{w_j}{\rho^2-\rho_j^2}},
\label{eq:bary_even_leven}
\end{align}
where the prime on the summations means that the last term is halved.  One can also simplify the barycentric weights in \eqref{eq:bary_poly} to products involving only $\rho_0, \ldots, \rho_n$:
\begin{align}
	w_j = \begin{cases}
		\left(\ds 2\prod_{\substack{i=0 \\ i\ne j}}^{n} \left(\rho_j^2 -\rho_i^2\right)\right)^{-1}, & j\neq n, \\
		(-1)^{n-1}\left(\ds \prod_{i=0}^{n-1}\rho_i^2\right)^{-1}, & j = n. 
	\end{cases}
	\label{eq:barywghts_even_leven}
\end{align}
These formulas can be further simplified by eliminating the common 2 in all the weights, using $\rho_n = 0$, and incorporating the division by 2 into the weights when $j=n$ in the summations to obtain:
\begin{align}
\ue(\rho) = \dfrac{\ds\sum_{j=0}^{n} \dfrac{w_j}{\rho^2-\rho_j^2}f_j}{\ds\sum_{j=0}^{n} \dfrac{w_j}{\rho^2-\rho_j^2}},\quad 
	w_j = \left(\ds \prod_{\substack{i=0 \\ i\ne j}}^{n} \left(\rho_j^2 -\rho_i^2\right)\right)^{-1},
\label{eq:bary_even}
\end{align}
Note that while we used the ordering $\rho_j > \rho_{j+1}$, $j=0,\ldots,n-1$ to derive \eqref{eq:bary_even}, this ordering is not necessary to use the formula.  

If $\ell=2n+1$, then the origin is not included in the point set and it is straightforward to show that the barycentric weights in \eqref{eq:bary_poly} satisfy $w_{j} = -w_{\ell-j}$, $j=0,\ldots,n$.  Using this symmetry, and following a similar approach to the derivations of \eqref{eq:bary_even_leven} and \eqref{eq:barywghts_even_leven} produces the following formulas for the interpolant and weights:
\begin{align*}
\ue(\rho) = \dfrac{\ds\sum_{j=0}^n \dfrac{w_j\rho_j}{\rho^2-\rho_j^2}f_j}{\ds\sum_{j=0}^n \dfrac{w_j\rho_j}{\rho^2-\rho_j^2}},\quad 
w_j = \left(\rho_j\prod_{\substack{i=0 \\ i\ne j}}^{n} \left(\rho_j^2 - \rho_i^2\right)\right)^{-1}.
\end{align*}
By dividing out the common  $\rho_j$ factor in the denominator of the formula for the weights with the same factor in the numerator of the summations defining $\ue$, we obtain the same barycentric formula \eqref{eq:bary_even} for the interpolant and weights. Thus, \eqref{eq:bary_even} can be used in both cases of $\ell = 2n$ or $\ell=2n+1$, which correspond to $0\in\mathcal{S}_n$ and $0\notin\mathcal{S}_n$, respectively.

The barycentric formula for the case where $f$ is an odd function, denoted by $\uo(\rho)$, can be derived using the same procedure as the even case, but exploiting the odd symmetry in the function samples: $f_j = -f_{\ell-j}$, $j=0,\ldots,n$.  However, we derive it using a simpler approach that relies on the even formula \eqref{eq:bary_even}; this is the analogous the approach used by Berrut~\cite{berrut1984baryzentrische} to derive the sine interpolation formula \eqref{eq:oddt} from the cosine formula~\eqref{eq:event}.  If $\ell = 2n$, so that $0\in\mathcal{S}_n$, then the unique odd degree polynomial that interpolates the data must be of degree $2n-1$.  This interpolant can be constructed by first forming the degree $2n$ even polynomial interpolant \eqref{eq:bary_even} to the even data $f_j\rho_j$, $j=0,\ldots,n,$ and then dividing it by $\rho$, which gives
\begin{align}
\uo(\rho) = 
\dfrac{1}{\rho}
\dfrac{\ds\sum_{j=0}^{n} \dfrac{w_j\rho_j}{\rho^2-\rho_j^2}f_j}{\ds\sum_{j=0}^{n} \dfrac{w_j}{\rho^2-\rho_j^2}},\;\text{if}\; 0\in\mathcal{S}_n.
\label{eq:bary_odd_origin1}
\end{align}
If instead $\ell=2n+1$, so that $0\notin\mathcal{S}_n$, then the unique odd interpolant to $f_j$ will be of degree $2n+1$.  So, we again use the unique degree $2n$ even polynomial interpolant \eqref{eq:bary_even}, but now for the data $f_j/\rho_j$, $j=0,\ldots,n,$ and then multiply it by $\rho$:
\begin{align}
\uo(\rho) = 
\rho\dfrac{\ds \sum_{j=0}^{n} \dfrac{w_j}{\rho_j(\rho^2-\rho_j^2)}f_j}{\ds\sum_{j=0}^{n} \dfrac{w_j}{\rho^2-\rho_j^2}},\;\text{if}\; 0\notin\mathcal{S}_n, 
\label{eq:bary_odd_noorigin}
\end{align}

An equivalent interpolation formula to \eqref{eq:bary_odd_origin1} can be obtained without a division by $\rho$ by using the result that \eqref{eq:bary_odd_origin1} is exact for the function $f(\rho) = \rho$.  Combining this formula with \eqref{eq:bary_odd_noorigin}, we obtain the following barycentric formula for samples with odd symmetry:
\begin{align}
\uo(\rho) = 
\begin{cases}
\rho
\dfrac{\ds\sum_{j=0}^{n} \dfrac{w_j\rho_j}{\rho^2-\rho_j^2}f_j}{\ds\sum_{j=0}^{n} \dfrac{w_j \rho_j^2}{\rho^2-\rho_j^2}}, & 0\in\mathcal{S}_n, \\
\rho\dfrac{\ds \sum_{j=0}^{n} \dfrac{w_j}{\rho_j(\rho^2-\rho_j^2)}f_j}{\ds\sum_{j=0}^{n} \dfrac{w_j}{\rho^2-\rho_j^2}}, & 0\notin\mathcal{S}_n, 
\end{cases}
\label{eq:bary_odd}
\end{align}
where the barycentric weights are given in \eqref{eq:bary_even}.

When the points $\mathcal{S}_n$ correspond to Chebyshev points of the first or second kind, the barycentric weights $w_j$ given over $[-1,1]$ in \eqref{eq:bary_poly} can be computed explicitly~\cite{berrut2004barycentric} and the interpolation formulas \eqref{eq:bary_even} and \eqref{eq:bary_odd} can then be simplified.   We have summarized these formulas in the first two rows of Table \ref{tbl:radial_formulas} for the specific case of interpolating in the radial direction on the disk.  

As noted at the end of Section \ref{sec:even_oddt}, the barycentric weights in \eqref{eq:bary_poly} for the GL points can be computed efficiently (and stably) using the \texttt{legpts} function of Chebfun~\cite{driscoll2014chebfun}.  These weights can then be appropriately modified for incorporation into \eqref{eq:bary_even} and \eqref{eq:bary_odd}.

\subsection{Barycentric formulas for the disk\label{sec:bary_formula_disk}}
Let $f_{j,k}$ denote samples of a function $f(\phi,\rho)$ on the tensor product grid $(\phi_k,\rho_j)$, $j=0,\ldots,n$, $k=0,\ldots,2m-1$. Then the samples of $f^-(\phi,\rho)$ and $f^+(\phi,\rho)$ from Lemma \ref{lemma:bmcII} are given as 
\begin{align}
f_{j,k}^{-} = \frac12(f_{j,k}-f_{j,k+m}) \;\text{and}\; f_{j,k}^{+} = \frac12(f_{j,k}+f_{j,k+m}),\; j=0,\ldots,n,\; k=0,\ldots,m-1.
\label{eq:fplus_fminus_disk}
\end{align}  
We use the same approach as the sphere to interpolate these sets of data and construct bivariate interpolants from tensor products of the 1D anti-periodic/periodic trigonometric formulas from Section \ref{sec:periodic_aperiodic} with the 1D odd/even polynomial formulas from the previous section.  The details in the derivations of the disk formulas are similar to the sphere formulas, so we omit them and just state the result:
\begin{align}
\label{eq:bary_disk}
\disk(\phi , \rho) = 
\begin{cases}
\dfrac{\displaystyle{\sum_{k=0}^{m-1} (-1)^k \left[\cot \left(\phi - \phi_k\right)\ue_k(\rho) + \csc \left(\phi - \phi_k\right)\uo_k(\rho)\right]}}{\displaystyle{\sum_{k=0}^{m-1} (-1)^k\cot \left(\phi - \phi_k\right)}}, & m \text{ even }\\
\dfrac{\displaystyle{\sum_{k=0}^{m-1}(-1)^k \left[\csc\left(\phi - \phi_k\right)\ue_k(\rho) + \cot \left(\phi - \phi_k\right)\uo_k(\rho)\right]}}
{\displaystyle{\sum_{k=0}^{m-1}(-1)^k \csc \left(\phi - \phi_k\right)}}, & m \text{ odd },
\end{cases}
\end{align}
where the radial interpolants $\ue_k(\rho)$ and $\uo_k(\rho)$ are given in the Table \ref{tbl:radial_formulas}. 
\begin{table}[htb]
\centering
\begin{tabular}{|c||c|c||c|c|}
\hline
Grid & \multicolumn{2}{c||}{Origin included} & \multicolumn{2}{c|}{Origin not included}\\
\hline
 & $\ue_k(\rho)$ & $\uo_k(\rho)$ & $\ue_k(\rho)$ & $\uo_k(\rho)$ \\
\hline
\hline
\hline
CH1 & $\ds \dfrac{\ds \sideset{}{^{\prime}}\sum_{j=0}^n \dfrac{(-1)^j \xi_j}{\rho^2 - \rho_j^2}f_{j,k}^{+}}{\ds\sideset{}{^{\prime}}\sum_{j=0}^n \dfrac{(-1)^j \xi_j}{\rho^2 - \rho_j^2}}$ & 
$\rho\dfrac{\ds\sideset{}{^{\prime}}\sum_{j=0}^{n} \dfrac{(-1)^j\xi_j\rho_j}{\rho^2-\rho_j^2}f_{j,k}^{-}}{\ds\sideset{}{^{\prime}}\sum_{j=0}^{n} \dfrac{(-1)^j\xi_j\rho_j^2}{\rho^2-\rho_j^2}}$ & 
$\ds \dfrac{\ds \sideset{}{}\sum_{j=0}^n \dfrac{(-1)^j \rho_j\gamma_j}{\rho^2 - \rho_j^2}f_{j,k}^{+}}{\ds\sideset{}{}\sum_{j=0}^n \dfrac{(-1)^j \rho_j\gamma_j}{\rho^2 - \rho_j^2}}$ & 
$\rho\dfrac{\ds\sideset{}{}\sum_{j=0}^{n} \dfrac{(-1)^j\gamma_j}{\rho^2-\rho_j^2}f_{j,k}^{-}}{\ds\sideset{}{}\sum_{j=0}^{n} \dfrac{(-1)^j\gamma_j\rho_j}{\rho^2-\rho_j^2}}$ \\
\cline{2-5}
& 
\multicolumn{2}{c||}{where $\ds\xi_j = \sin\left(\dfrac{2j+1}{4n+2}\pi\right)$} &\multicolumn{2}{c|}{where $\gamma_j = \sin\left(\dfrac{2j+1}{4n+4}\pi\right)$ } \\
\hline
\hline
CH2 & $\ds \dfrac{\ds \sideset{}{_{\prime}'}\sum_{j=0}^n \dfrac{(-1)^j}{\rho^2 - \rho_j^2}f_{j,k}^{+}}{\ds \sideset{}{_{\prime}'}\sum_{j=0}^n \dfrac{(-1)^j}{\rho^2 - \rho_j^2}}$ & 
$\ds \rho\dfrac{\ds\sideset{}{_{\prime}'}\sum_{j=0}^{n} \dfrac{(-1)^j\rho_j}{\rho^2-\rho_j^2}f_{j,k}^{-}}{\ds\sideset{}{_{\prime}'}\sum_{j=0}^{n} \dfrac{(-1)^j\rho_j^2}{\rho^2-\rho_j^2}}$ & 
$\ds \dfrac{\ds \sideset{}{_{\prime}}\sum_{j=0}^n \dfrac{(-1)^j\rho_j}{\rho^2 - \rho_j^2}f_{j,k}^{+}}{\ds \sideset{}{_{\prime}}\sum_{j=0}^n \dfrac{(-1)^j\rho_j}{\rho^2 - \rho_j^2}}$ & 
$\ds \rho\dfrac{\ds\sideset{}{_{\prime}}\sum_{j=0}^{n} \dfrac{(-1)^j}{\rho^2-\rho_j^2}f_{j,k}^{-}}{\ds\sideset{}{_{\prime}}\sum_{j=0}^{n} \dfrac{(-1)^j\rho_j}{\rho^2-\rho_j^2}}$ \\
\hline 
\hline
GL & $\ds \dfrac{\ds\sum_{j=0}^{n} \dfrac{w_j}{\rho^2-\rho_j^2}f_j}{\ds\sum_{j=0}^{n} \dfrac{w_j}{\rho^2-\rho_j^2}}$ & 
$\ds \rho \dfrac{\ds\sum_{j=0}^{n} \dfrac{w_j\rho_j}{\rho^2-\rho_j^2}f_j}{\ds\sum_{j=0}^{n} \dfrac{w_j \rho_j^2}{\rho^2-\rho_j^2}}$ & 
$\ds \dfrac{\ds\sum_{j=0}^{n} \dfrac{w_j}{\rho^2-\rho_j^2}f_j}{\ds\sum_{j=0}^{n} \dfrac{w_j}{\rho^2-\rho_j^2}}$ &  
$\ds \rho\dfrac{\ds \sum_{j=0}^{n} \dfrac{w_j}{\rho_j(\rho^2-\rho_j^2)}f_j}{\ds\sum_{j=0}^{n} \dfrac{w_j}{\rho^2-\rho_j^2}}$ \\
\hline
\end{tabular}
\caption{Barycentric polynomial interpolants in the radial direction of the disk to the data $f_{j,k}^{+}$ and $f_{j,k}^{-}$ given in \eqref{eq:fplus_fminus_disk} for the various grids under consideration. The lower prime on a summation sign means the first term is halved, while an upper prime means the last term is halved.  The formulas for $w_{j}$ are given in \eqref{eq:bary_even}.  In the CH1 and CH2 cases, the points are assumed to be arranged so that $\rho_j > \rho_{j+1}$, $j=0,\ldots,n-1$.\label{tbl:radial_formulas}}
\end{table}
\begin{theorem}
The interpolant $\disk:[0,2\pi]\times[0,1]\rightarrow\mathbb{C}$ to the data $f_{jk}$, $j=0,\ldots,n$, $k=0,\ldots,2m-1$, sampled from a continuous function $f$ on the disk at the CH1, CH2, or GL grids, is a BMC function.  Furthermore, in the case where the grid contains $\rho=0$, $\disk$ is a BMC-II function.
\end{theorem}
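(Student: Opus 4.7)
The plan is to follow the template set by the proof of Theorem \ref{thm:bmc_sphere}, with Lemma \ref{lemma:bmcII} playing the role that Lemma \ref{lemma:bmcI} played for the sphere. Because the bivariate interpolant is built as $\disk = \disk^{+} + \disk^{-}$, where $\disk^{+}$ couples the $\pi$-periodic trigonometric barycentric formula in $\phi$ with the even polynomial interpolants $\ue_{k}(\rho)$ from Table \ref{tbl:radial_formulas}, and $\disk^{-}$ couples the $\pi$-antiperiodic trigonometric formula with the odd polynomial interpolants $\uo_{k}(\rho)$, the parity-in-$\rho$ and (anti)periodicity-in-$\phi$ hypotheses of Lemma \ref{lemma:bmcII} are satisfied by construction. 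Thus the first assertion — that $\disk$ is a BMC function for each of the CH1, CH2, and GL grids — is immediate.

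For the second assertion, I would argue that when $\rho=0$ lies in the grid (so $\rho_{n}=0$), continuity of $f$ on the disk at the origin forces $f(\phi,0)$ to equal a single constant $\alpha\in\mathbb{C}$ independent of $\phi$. Hence $f_{n,k}=\alpha$ for every $k=0,\ldots,2m-1$, and by \eqref{eq:fplus_fminus_disk},
\begin{equation*}
f_{n,k}^{+}=\alpha,\qquad f_{n,k}^{-}=0,\qquad k=0,\ldots,m-1.
\end{equation*}
I would then evaluate the radial interpolants at $\rho=0$. Because $\ue_{k}$ is the unique even polynomial interpolant to the data $\{f_{j,k}^{+}\}$, it satisfies $\ue_{k}(0)=f_{n,k}^{+}=\alpha$ by the interpolation property at the node $\rho_{n}=0$. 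Inspecting the "origin included" columns of Table \ref{tbl:radial_formulas} for each of CH1, CH2, and GL, one sees that $\uo_{k}(\rho)$ carries an explicit factor of $\rho$, so $\uo_{k}(0)=0$.

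To finish, I would plug these radial values into the $\phi$-direction formulas in \eqref{eq:bary_disk}. The $\pi$-periodic barycentric formula is exact for constants, so $\disk^{+}(\phi,0)=\alpha$ for every $\phi$; the $\pi$-antiperiodic formula has vanishing numerator whenever all inputs are zero, so $\disk^{-}(\phi,0)=0$ identically. Therefore $\disk(\phi,0)=\alpha$ is constant in $\phi$, and Lemma \ref{lemma:bmcII} upgrades the BMC conclusion to BMC-II.

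The main obstacle is really a verification rather than a hard step: one must confirm, case by case, that each radial formula in the "origin included" columns of Table \ref{tbl:radial_formulas} indeed yields $\ue_{k}(0)=f_{n,k}^{+}$ (interpolation at the node $\rho=0$) and $\uo_{k}(0)=0$ (the odd-polynomial vanishing). Both follow directly from the derivations in Section \ref{sec:even_odd}, so no extra machinery beyond what has already been established in the paper is needed.
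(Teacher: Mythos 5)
Your proof is correct and follows essentially the same route as the paper, which simply cites Lemma \ref{lemma:bmcII} for the first claim and refers to ``the same arguments as the second part of the proof of Theorem \ref{thm:bmc_sphere}'' for the second. You have merely written out explicitly what that reference entails (constancy of $f$ at the origin, $\ue_k(0)=\alpha$ and $\uo_k(0)=0$, exactness of the $\pi$-periodic formula for constants, and vanishing of the antiperiodic part), all of which is accurate.
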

\begin{proof}
The first result follows naturally from Lemma \ref{lemma:bmcII} and the second follows using the same arguments as the second part of the proof of Theorem \ref{thm:bmc_sphere}.
\end{proof}

\begin{remark}
Similar to the sphere, the interpolant \eqref{eq:bary_disk} is also more generally applicable to other tensor product grids that use equally spaced polar points, but different radial points than CH1, CH2, and GL.  The formulas in this more general case are based on $\ue_k$ and $\uo_k$ given in the third row of Table \ref{tbl:radial_formulas}.  If these radial points contain $0$, then the interpolant will be a BMC-II function.
\end{remark}


Note that if the grids do not contain the origin as interpolation points, then the interpolant \eqref{eq:bary_disk} is not guaranteed to be single-valued there.  As with the sphere grids not containing the poles, this could lead to issues when trying to differentiate the interpolated values near $\rho=0$.  However, we have not observed that the convergence rates of the interpolants are degraded in this case.  

\subsection{Numerical example}
We use a similar numerical experiment as the sphere to demonstrate the barycentric interpolation formulas \eqref{eq:bary_disk} work for the disk.  For this case, we interpolate the target function
\begin{equation}
	\label{eq:fund}
	f(\phi,\rho)=\sin(21\pi(1+\cos(\pi\rho))(\rho^2-2\rho^5\cos(5(\phi-0.11)))),
\end{equation}
which is displayed in Figure \ref{fig:errs_disk}.  We again select $n = m$, giving $N=2n(n+1)$ total grid points and samples of $f$.  We compute the relative max-norm error between $\disk$ and $f$ from a dense set of scattered evaluation points over the disk.  These errors are displayed as a function of $m$ for each of the three grids under consideration in Figure \ref{fig:errs_disk} (b).  We see the max-norm errors are nearly the same for each of the three different grid types and that the errors decrease exponentially fast with $m$, which is expected since $f$ is infinitely smooth.

\begin{figure}[ht]
	\centering
	\begin{tabular}{cc}
	\includegraphics[width=0.4\linewidth]{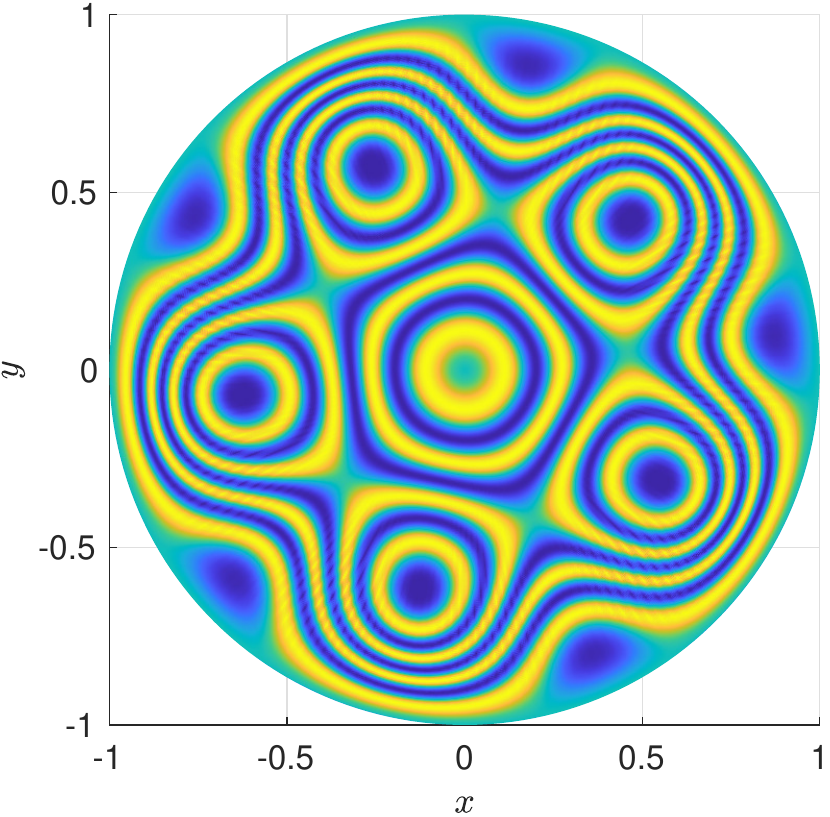} & \includegraphics[width=0.51\linewidth]{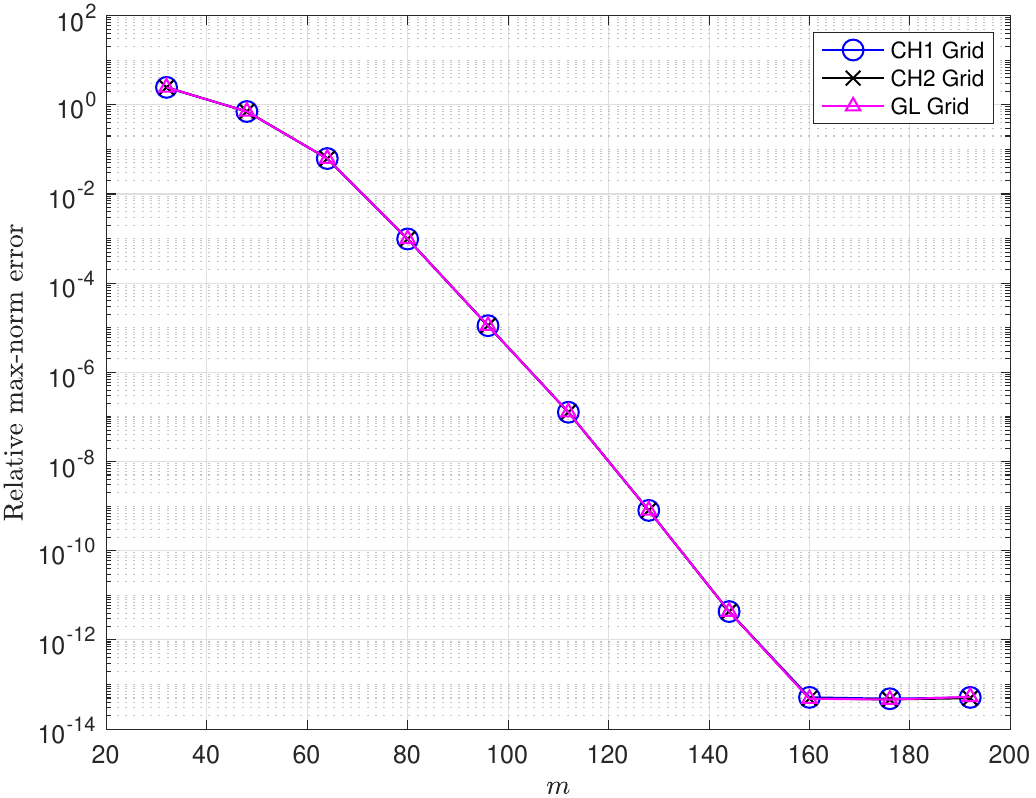} \\
	(a) & (b) 
	\end{tabular}
	\caption{(a) Test function \eqref{eq:fund} on the disk, where dark blue is corresponds to -1 and bright yellow to 1. (b) Relative max-norm error in the barycentric interpolants \eqref{eq:bary_disk} of the function in (a) for the CH1, CH2, and GL grids using $n = m$.\label{fig:errs_disk}}
\end{figure}

\section{Semi-Lagrangian advection (SLA) on the sphere\label{sec:application}}
In this section we combine the new spherical barycentric interpolation formulas with a SLA method to solve the tracer transport equation on the sphere. For the unit sphere $\Sph$, this equation is given 
\begin{align}
\frac{\partial q}{\partial t} + \vu \cdot \nabla_{\Sph} q = 0,\quad \text{or}\quad \frac{Dq}{Dt} &= 0, \label{eq:advection}
\end{align}
where $q$ is the scalar quantity being transported, $\vu$ is an incompressible velocity field that is tangent to $\Sph$, and $\nabla_{\Sph}$ denotes the surface gradient operator on $\Sph$.  SLA techniques have been widely used in atmospheric sciences for simulating tracer transport and more complex non-linear shallow water flows~\cite{staniforth1991SL}. The appeal of SLA lies in its ability to avoid the strict time-step limitations of Eulerian methods, allowing for significantly larger time-steps than those dictated by the CFL condition, all while maintaining stability~\cite{FalconeFerrettiBook}.  We follow a similar approach to~\cite{shankar_2018}  for testing new interpolation methods with SLA for solving \eqref{eq:advection}.

The SLA scheme we consider operates as follows: we assume that a Lagrangian particle reaches each point on a fixed Eulerian grid $X = \{(\phi_j,\theta_j)\}_{j=1}^N$ at some time $t+\Delta t$, carrying a certain amount of the scalar $q$. The amount of $q$ at $(\phi_j,\theta_j)$ must have been transported in the flow field $\vu$ from the particle’s \textit{departure point} $(\phi_j^{\rm d},\theta_j^{\rm d})$ at time $t$. To find the {departure point}, we trace the particle backward in time from $t+\Delta t$ to $t$ along the flow field $\vu$, then determine $q$ by interpolating the scalar field to that departure point.  Please see Algorithm \ref{alg:sl_alg} for an outline of this scheme. Our primary focus is on the interpolation step, while the particle trajectory will be computed using a fifth-order Runge-Kutta method (see~\cite{shankar_2018} for details on this step).  We use this SLA scheme with each of the three grids from Section \ref{sec:tensor_product} and use the corresponding barycentric formula  \eqref{eq:bary_sphere} for the interpolation step.

\begin{algorithm}
\caption{SLA for tracer transport on $\Sph$ using barycentric interpolation}
\label{alg:sl_alg}
\begin{algorithmic}
        \State \textbf{Input:} 1) $\vu(\phi,\theta,t)$, incompresible velocity field tangent to $\Sph$; 2)  $q(\phi,\theta,0)$, initial tracer field; 3) $X = \{(\phi_j,\theta_j)\}_{j=1}^N$, Eulerian tensor product grid;
	4) $t_{\rm f}$, final time; 4) $\Delta t$, time-step.
	\State Initailize $\vq_{X}^{0} = \{q(\phi_j,\theta_j,0)\}_{j=1}^N$, $t=0$, and $k=0$.
	\While{$t \leq t_{\rm f}$}
		\State \textit{Particle-trajectory}: Trace back Lagrangian particle at grid point $\vx_j=(\phi_j,\theta_j)$ in the 
		\State  velocity field from $t+\Delta t$ to $t$ to find its departure point $\vx_j^{\rm d} = (\phi_j^{\rm d},\theta_j^{\rm d})$, for $j=1,\ldots,N$.
		\State \textit{Interpolation}: Interpolate $\vq^{k}_{X}$ to $X^{\rm d}=\{(\phi_j^{\rm d},\theta_j^{\rm d})\}_{j=1}^N$ using barycentric interpolation 
		\State formula \eqref{eq:bary_sphere} to obtain $\vq_{X^{\rm d}}^k$.
		\State Update $\vq^{k+1}_X =  \vq_{X^{\rm d}}^k$, $k=k+1$, and $t =  k\Delta t$.
	\EndWhile
\end{algorithmic}
\end{algorithm}


\subsection{Results}
We consider the popular deformational flow test case from~\cite{NairLauritzen2010} to test the barycentric SLA scheme. The velocity field for this test is given as
\begin{align}
u(\lambda,\theta,t) &= \frac{10}{T}\cos\lf(\frac{\pi t}{T}\rt)\sin^2\lf(\lambda - \frac{2\pi t}{T}\rt)\sin (2\theta) + \frac{2\pi}{T}\cos(\theta), \\
v(\lambda,\theta,t) &= \frac{10}{T}\cos\lf(\frac{\pi t}{T}\rt)\sin\lf(2\lambda - \frac{2\pi t}{T}\rt)\cos (\theta),
\end{align}
where $u$/$v$ are components of the velocity field in longitude/latitude and $T=5$. This velocity field translates and deforms the initial condition $q(\lambda,\theta,0)$ up to time $t=2.5$ and then reverses so that $q$ is returned to its initial position and value at time $t=5$, i.e., $q(\lambda,\theta,0)=q(\lambda,\theta,5)$. The initial condition then serves as the exact solution for comparing the numerical solutions at $t=5$. 
\begin{figure}[t]
\centering
\begin{tabular}{c|c}
Cosine bells & Gaussian bells\\
\hline
\includegraphics[width=0.5\textwidth]{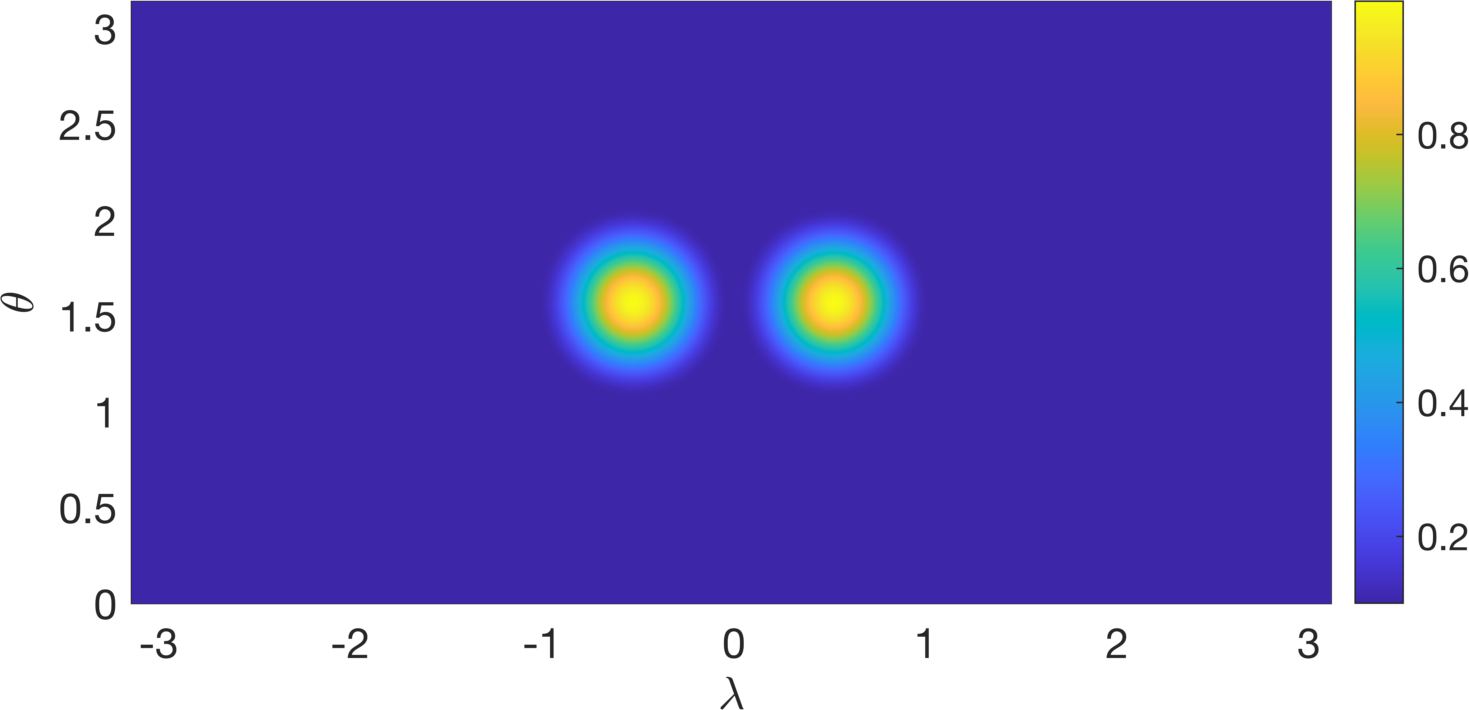} 
& \includegraphics[width=0.5\textwidth]{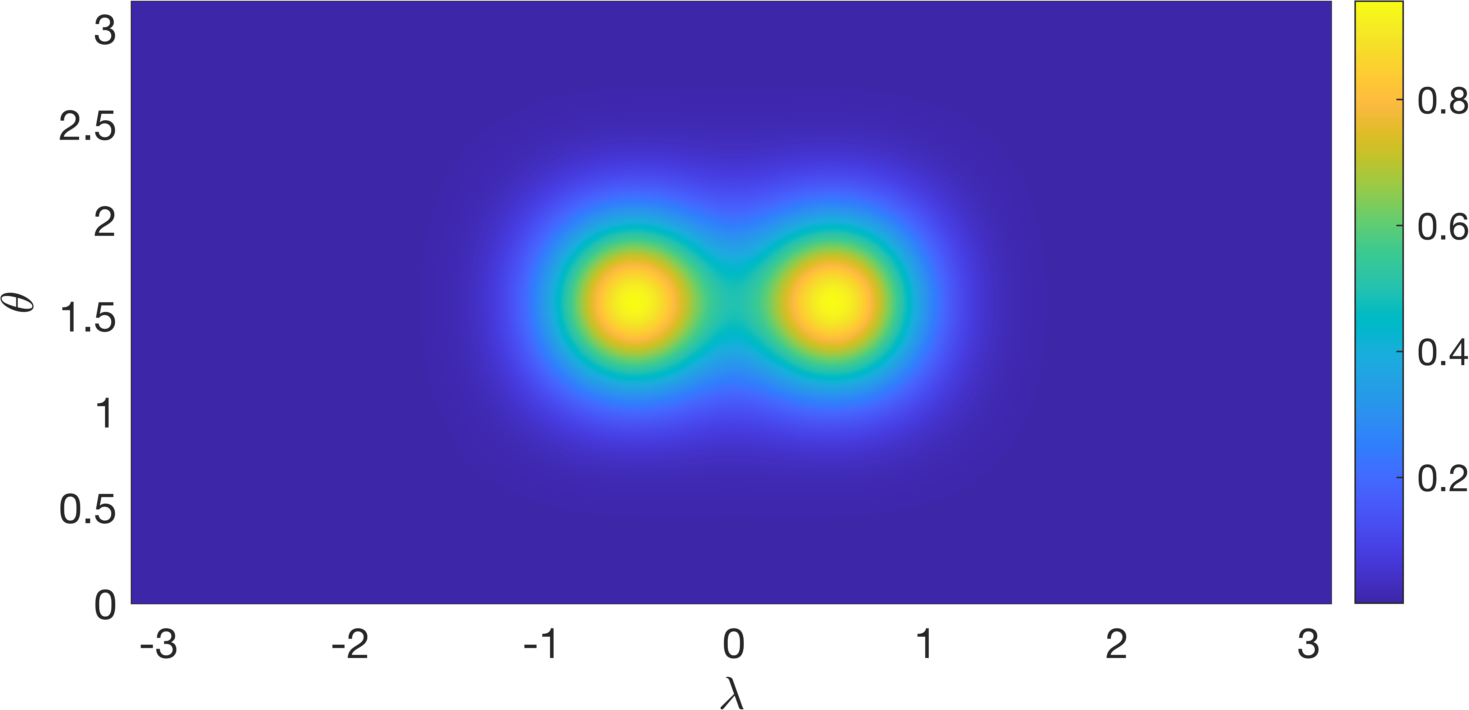} \\
\multicolumn{2}{c}{Solution at $t=0$ and $t=5$} \\
\includegraphics[width=0.5\textwidth]{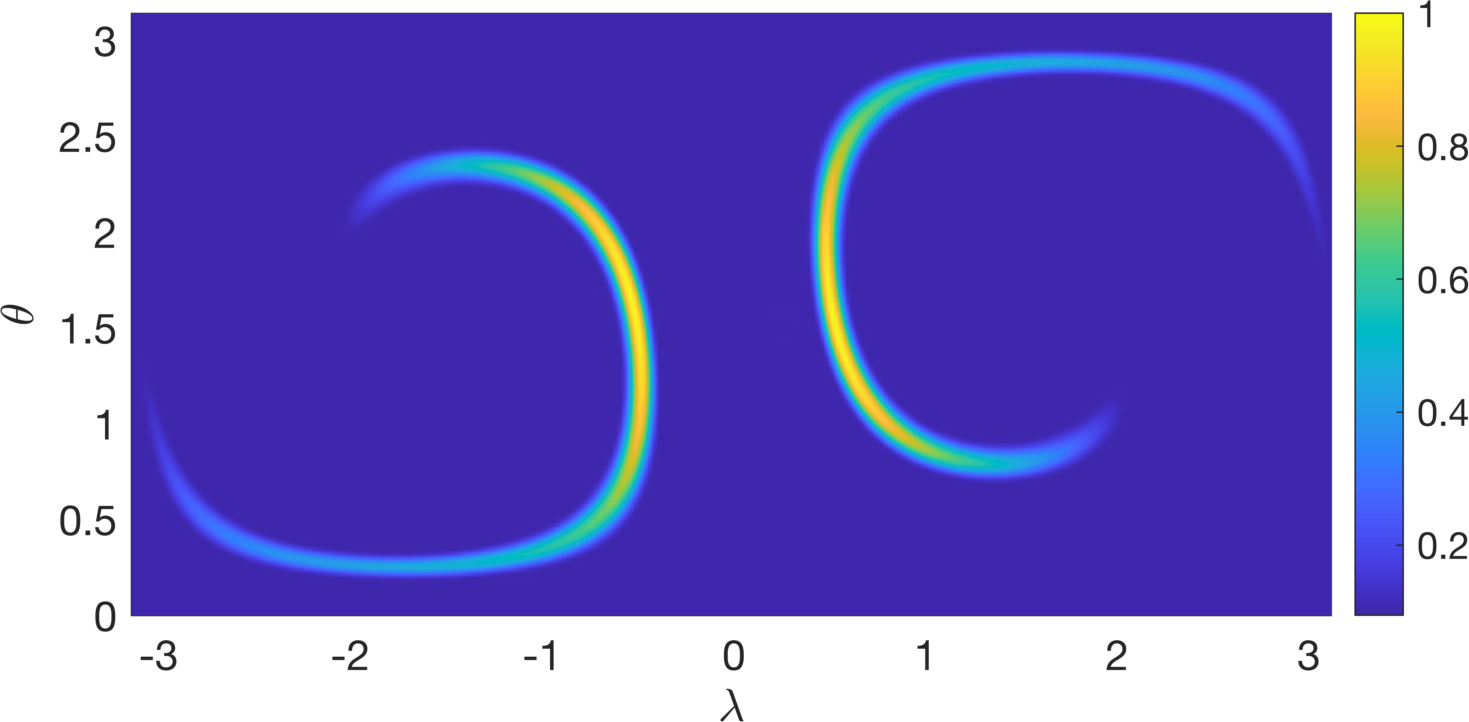} 
& \includegraphics[width=0.5\textwidth]{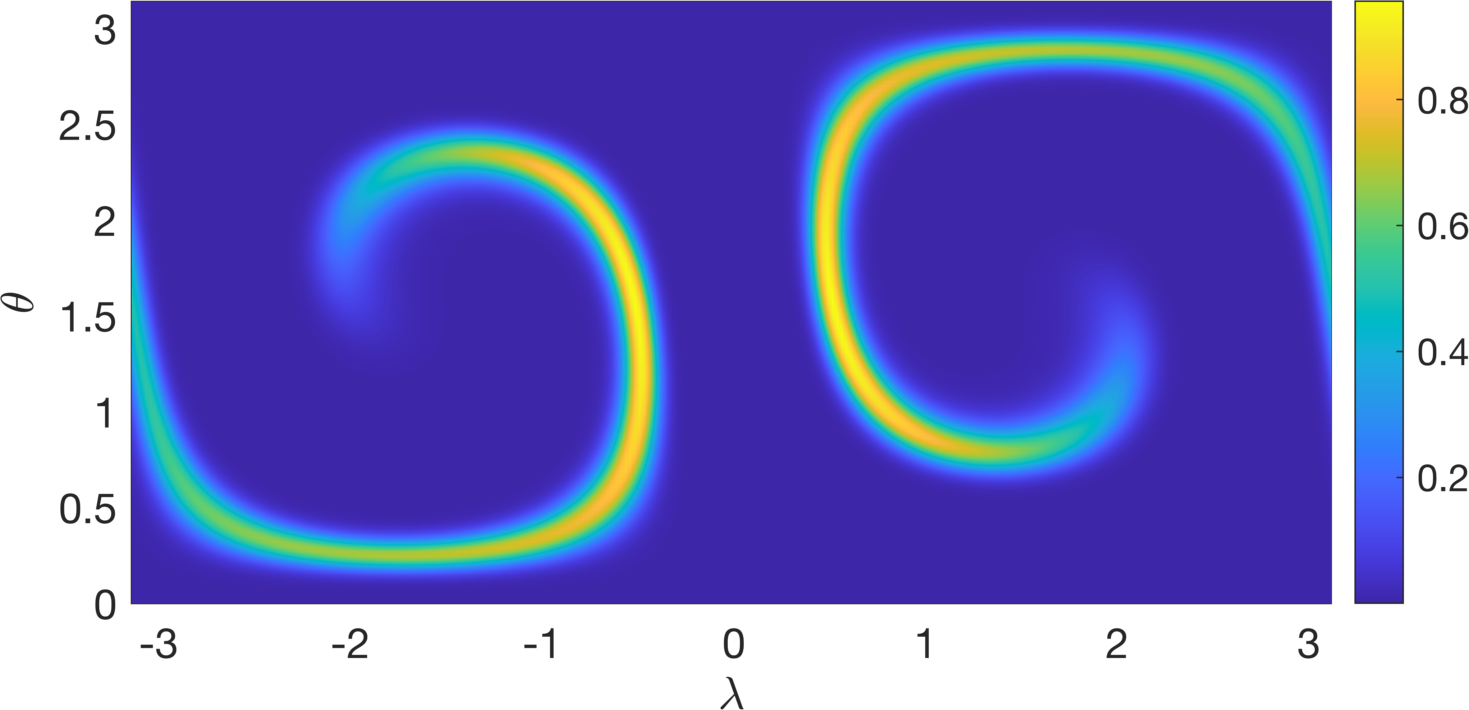} \\
\multicolumn{2}{c}{Solution at $t=2.5$}
\end{tabular}
\caption{Illustrations of the solutions at $t=0$, $t=2.5$, and $t=5$ (same as $t=0$) for the deformational flow test case. \label{fig:ics}}
\end{figure}
We consider the following two initial conditions for the experiments:
\begin{itemize}
\item \textit{Cosine bells}: $q(\lambda,\theta,0) = 0.1 + 0.9(q_1(\lambda,\theta) + q_2(\lambda,\theta))$,
where for $i=1,2$
\begin{align*}
q_i(\lambda,\theta) &=
\begin{cases}
\frac{1}{2}\lf(1 + \cos\lf(2\pi \arccos\lf(r_i(\lambda,\theta)\rt)\rt)\rt), & \arccos\lf(r_i(\lambda,\theta)\rt) < \frac{1}{2}, \\
0 & \text{otherwise}.
\end{cases}
\end{align*}
\item \textit{Gaussian bells}:  $
q(\lambda,\theta,0) = 0.95\lf( e^{-10(1-r_1(\lambda,\theta))} + e^{-10(1-r_2(\lambda,\theta))} \rt)$.
\end{itemize}
Here $r_i(\lambda,\theta) = \cos(\theta)\cos(\theta_i) + \cos(\lambda-\lambda_i)\sin(\theta)\sin(\theta_i)$, $i=1,2$, and the centers of the bells are given as $(\lambda_1,\theta_1)  = (\pi/6,0)$ and $(\lambda_2,\theta_2)  = (-\pi/6,0)$.  The first initial condition is only $C^1(\Sph)$ and is meant to test the dispersive errors of a numerical solver, while the second is $C^{\infty}(\Sph)$ and is meant to show the maximum convergence rate that is possible.  Plots of the solutions of the deformational flow test case at $t=0$, $2.5$, and $5$ are displayed in Figure \ref{fig:ics}.


We use all three common tensor-product spherical grids discussed in Section \ref{sec:tensor_product} in the experiments, with $2m$ points in longitude and $n=m+1$ points in latitude, which gives the grid spacing in longitude as $h=\pi/m$.  We select the SLA time-step in the experiments according to a fixed multiple of the CFL number, which we set to $h/\|\mathbf{u}\|_{\infty}$, where $\|\mathbf{u}\|_{\infty}=2.93$.  For the cosine bells tests we use a time-step 10 times the CFL number, while for the Gaussian bells tests we just use the CFL number.  These values were chosen so that spatial errors dominate the computed solutions for most of the resolutions.  

The relative max-norm errors of the numerical solutions from the experiments are plotted in Figure \ref{fig:sla_errors} as a function of the grid resolution $m$.  For the cosine bells problem (see part (a) of the figure), we see that the errors are similar for all three grids and decrease as $\mathcal{O}(m^{-2})$ (or $\mathcal{O}(h^2)$).  This algebraic rate is expected since the initial condition is only $C^1(\Sph)$.  In contrast, we see that the errors decrease exponentially fast with $m$ for the $C^{\infty}(\Sph)$ Gaussian bells initial condition, until temporal errors begin to dominate the computed solutions after $m=128$.  

Finally, we compare the accuracy of the new barycentric SLA method to other methods from the literature for this same test problem in Table \ref{tbl:compare}.  We see for the cosine bells test case that the new method compares favorably to all the methods and gives a slightly smaller error than the RBF-PU based SLA method for the same resolution and time-step.  For the Gaussian bells problem, the new method clearly provides the best accuracy for comparable resolutions and time-steps to the other methods.  The next lowest error is produced by the Global RBF method, which is also spectrally accurate.  By doubling the time-step for the same resolution, the new method shows a further decrease in the errors indicating that temporal errors may still be dominating.  It should be noted, however, that the CSLAM, DG, and HOMME methods are all mass-conserving, whereas, the present SLA method is not.

\begin{figure}[t!]
	\centering	
	\begin{tabular}{cc}
		\includegraphics[width=0.45\textwidth]{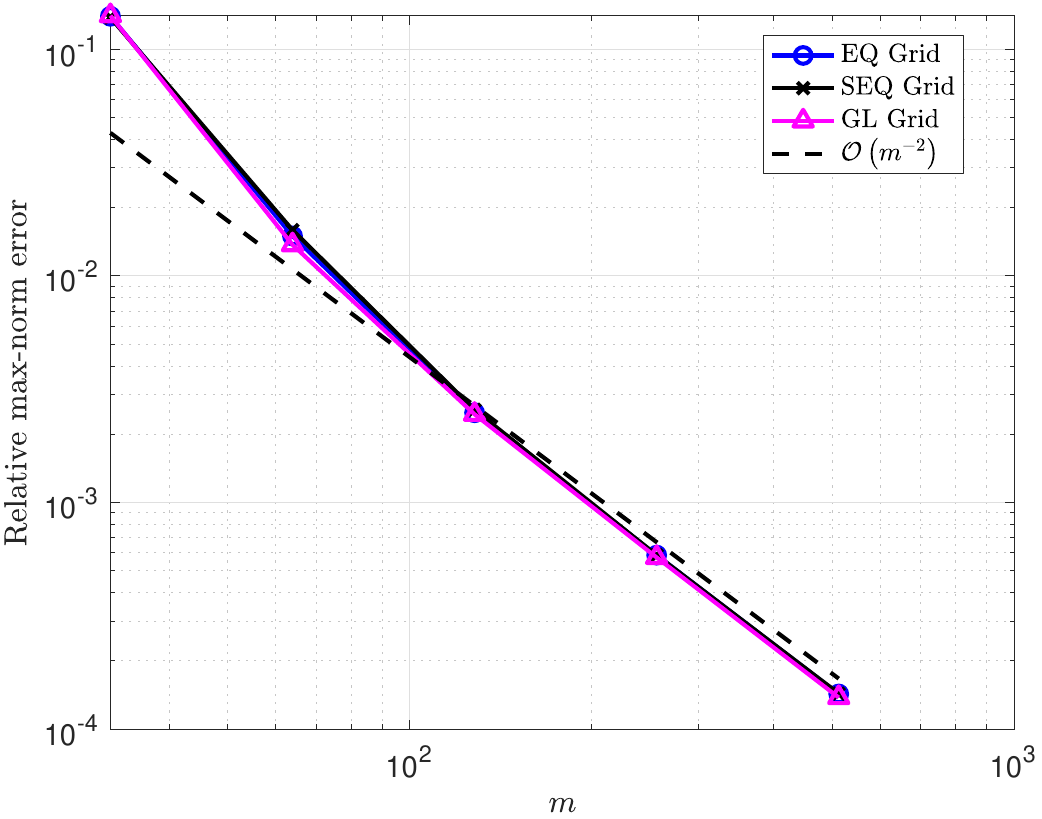}
		& 
		\includegraphics[width=0.46\linewidth]{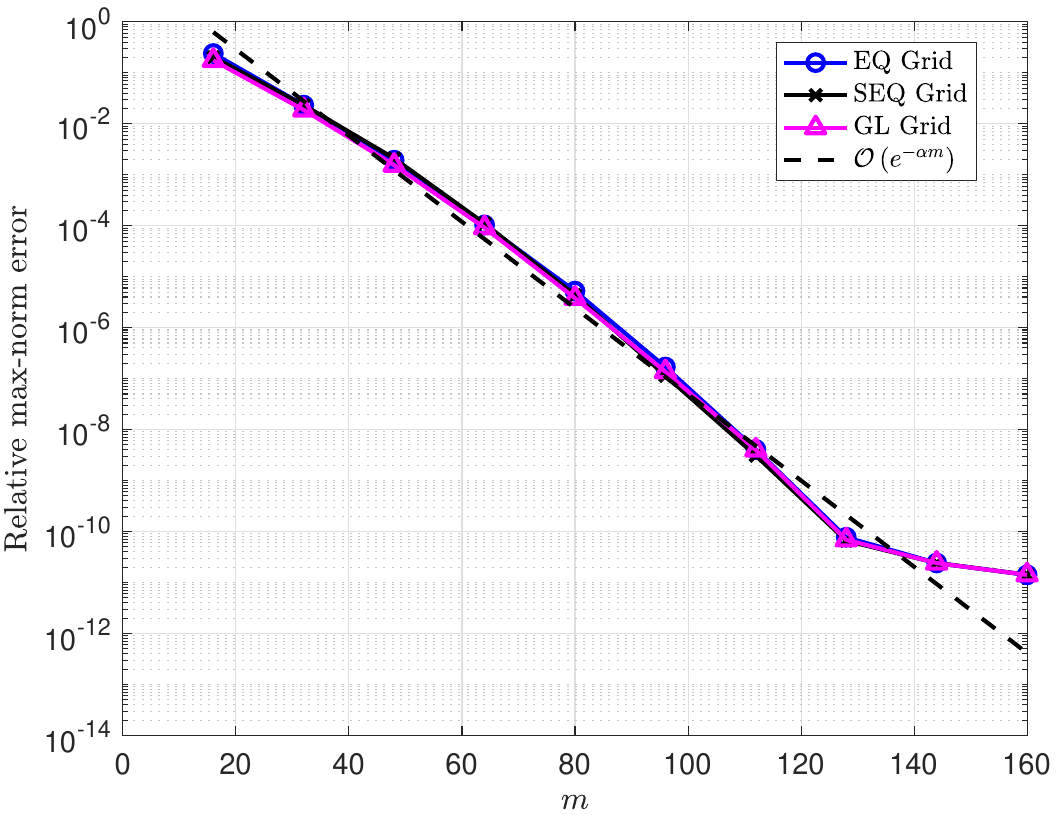}\\
		(a) & (b)
	\end{tabular}
	\caption{Spatial convergence results for (a) the cosine bells test case using a time-step equal to $10\times$CFL and (b) the Gaussian bells test case using a time-step equal to CFL.  All the grid types use $2m$ points in longitude and $n=m+1$ in latitude. Note (a) is plotted on a loglog scale while (b) is plotted on a semilogy scale and both plots display estimates of convergence rates as dashed lines.\label{fig:sla_errors}}
\end{figure}

\definecolor{Gray}{gray}{0.9}

\begin{table}[htb]
\centering
\begin{tabular}{|c||c|c|c|c|}
\hline
             & Resolution & Degrees of  & Number of & Relative $\ell_2$ \\
Method & (in degrees) & freedom & time-steps & error \\
\hline
\hline
\rowcolor{Gray}\multicolumn{5}{|c|}{Cosine Bells}\\
\hline
CSLAM~\cite{LauritzenEtAl2012} & $0.75\degree$ & 86400 & 240 & $\approx 6 \times 10^{-3}$ \\
DG, $p=3$~\cite{NairLauritzen2010} & $1.5\degree$ & 38400 & 2400 & $1.39 \times 10^{-2}$ \\
RBF-PU~\cite{shankar_2018}, $n=84$ & $1.5\degree$ & 23042 & 35 & $3.63 \times 10^{-3}$\\
Global RBF~\cite{shankar_2018} & $1.64\degree$ & 15129 &  45 & $5.10 \times 10^{-3}$\\
Barycentric DFS & $1.5\degree$ & 29040 & 35 & $3.25 \times 10^{-3}$ \\
\hline
\hline
\rowcolor{Gray}\multicolumn{5}{|c|}{Gaussian Bells}\\
\hline
CSLAM~\cite{LauritzenEtAl2012} & $0.75\degree$ & 86400 & 240 & $\approx 5 \times 10^{-4}$ \\
HOMME, $p=6$~\cite{LauritzenEtAl2014} & $1.5\degree$ & 29400 & 4800 & $\approx 3 \times 10^{-3}$ \\
RBF-PU~\cite{shankar_2018}, $n=84$ & $1.5\degree$ & 23042 & 80 & $1.35 \times 10^{-5}$\\
Global RBF~\cite{shankar_2018} & $1.64\degree$ & 15129 &  200 & $7.68 \times 10^{-8}$\\
Barycentric DFS & $1.5\degree$ & 29040 & 200 & $1.17 \times 10^{-8}$ \\
Barycentric DFS & $1.5\degree$ & 29040 & 400 & $7.99 \times 10^{-10}$ \\
\hline
\end{tabular}
\caption{Comparison of the barycentric DFS-based SLA scheme using the EQ grid with various methods from the literature for the two deformational flow test cases.  CSLAM refers to the conservative semi-Lagrangian multi-tracer transport scheme used in~\cite{LauritzenEtAl2012}, which uses a cubed-sphere grid. DG is the discontinuous Galerkin scheme from~\cite{NairLauritzen2010}, an Eulerian scheme employing $p=3$ degree polynomials (fourth-order accurate) on a cubed-sphere grid.  HOMME (High-Order Methods Modeling Environment) is an Eulerian scheme from~\cite{LauritzenEtAl2014}, also using a cubed-sphere grid, with results shown for a continuous Galerkin formulation using $p=6$ degree polynomials (seventh-order accurate).  RBF-PU (partition of unity) and global RBF refer to SLA advection schemes based on radial basis function interpolation from~\cite{shankar_2018}. Values marked with $\approx$ were estimated from plots of the relative $\ell_2$ errors in the referenced papers, as exact values were not provided.  The results given for the CSLAM, DG, and HOMME methods correspond to the non-filtered (or non shape-preserving) versions, which yielded the lowest errors for these test cases. Resolution (in degrees) indicates the approximate spacing of grid points (or nodes) around the equator.\label{tbl:compare}}
\end{table}

\section{Concluding remarks\label{sec:conclusion}}
We have introduced new bivariate barycentric formulas for the sphere and disk based on the DFS method.  These formulas are efficient in the sense that do not require a transform to trigonometric or polynomial coefficients, but instead work directly with the data and a set of weights that only depend on the grid. For many commonly used grids in applications, these weights are known explicitly or can be computed in a stable and efficient manner.  The formulas also deliver high-order accuracy for smooth solutions.  Moreover, the formulas are straightforward to implement, making them practical for a wide range of applications.  

The question of stability is more subtle, particularly given the results on the stability of trigonometric barycentric formulas in~\cite{austin2017numerical}.  However, as noted in that study, the trigonometric barycentric formula remains stable in most practical situations.  Whether this holds for the even/odd/$\pi$-periodic/$\pi$-anti-periodic trigonometric formulas used here remains an open question and warrants further investigation.  Nevertheless, our extensive numerical testing of the formulas, including their application to SLA, revealed no stability issues in practice.

The application of these new barycentric formulas to SLA for the tracer transport problem on the sphere shows promising results.  This technique compares favorably in terms of accuracy to existing methods, suggesting that further explorations on more complex atmospheric flows are justified.
 
Lastly, while this study focused on trigonometric and polynomial approximations for the sphere and disk, the resulting barycentric formulas could naturally extend to more general rational approximations.  For example, the even/odd polynomial formulas used for the disk could employ more general Floater-Hormann linear barycentric rational formulas~\cite{floater2007barycentric}, or even AAA type formulas~\cite{nakatsukasa2018aaa}.  This would only require a different technique for computing the weights.  Linear barycentric rational formulas have been used successfully for the disk and more general star-like domains in~\cite{berrut2021linear}.  Similarly, the trigonometric formulas derived here for the sphere and disk could be extended to rational trigonometric formulas using any of the approaches from~\cite{wilber2022data,bandiziol2019lebesgue,berrut2021bounding}.  In one dimension, rational barycentric formulas provide a richer approximation space, particularly for functions with singularities, and we expect that this benefit will also extend to the sphere and disk. 
	
\section*{Acknowledgements} 
The authors thank Anthony Austin for discussions about the $\pi$-antiperiodic barycentric interpolation formula discussed in Section \ref{sec:dfs_parity} and Alex Townsend and Daniel Fortunato for discussions about using DFS-based interpolation formulas within the SLA framework for solving PDEs on the sphere.  The authors work was partially supported by US National Science Foundation grants 1952674 and 2309712.

\section*{Conflict of Interest}

The authors declare no conflict of interest.

\bibliographystyle{siam}
\bibliography{compri}
	
\end{document}